\newtheorem{conj}{Conjecture}
\newtheorem{ques}{Question}
\definecolor{bluereadable}{HTML}{1A80BB}
\definecolor{redreadable}{HTML}{F2C45F}
\begin{document}

\title{On a conjecture of Faudree and Schelp}
%
%\titlerunning{Abbreviated paper title}
% If the paper title is too long for the running head, you can set
% an abbreviated paper title here
%
\author{Jan Goedgebeur\inst{1,2}%\orcidID{0000-0001-8984-2463} 
\and
Jorik Jooken\inst{1}%\orcidID{0000-0002-5256-1921} 
\and
Michiel Provoost\inst{1}%\orcidID{0000-0003-3988-3691} 
\and
Carol T. Zamfirescu\inst{2,3}%\orcidID{0000-0002-9673-410X}}
}
\authorrunning{Goedgebeur, Jooken, Provoost, Zamfirescu}

\institute{Department of Computer Science, KU Leuven Campus Kulak-Kortrijk, 8500 Kortrijk, Belgium\\
\email{\{jan.goedgebeur,jorik.jooken,michiel.provoost\}@kuleuven.be}\\ \and
Department of Mathematics, Computer Science and Statistics, Ghent University, 9000 Ghent, Belgium\\ \and
Department of Mathematics, Babe\c{s}-Bolyai University, Cluj-Napoca, Roumania\\
\email{czamfirescu@gmail.com}}
\maketitle
\begin{abstract}
In 1976 Faudree and Schelp conjectured that in a hamiltonian-con-nected graph on $n$ vertices, any two distinct vertices are connected by a path of length $k$ for every $k \ge n/2$. In 1978 Thomassen constructed a (non-cubic and non-planar) family of counterexamples, showing that there exist hamiltonian-connected $n$-vertex graphs containing two vertices with no path of length $n-2$ between them. We complement this result by describing cubic planar counterexamples on $6p+16$ vertices, each containing vertices between which there is no path of any odd length greater than $1$ and at most $4p+9$. Motivated by a remark of Thomassen about a gap in the cycle spectrum of hamiltonian-connected graphs, we also describe an infinite family of hamiltonian-connected graphs with many gaps in the first half of their cycle spectra.

%A natural question is how Faudree and Schelp's conjecture can be modified to obtain a correct statement. Two such directions could be to restrict the class of graphs under consideration or to restrict the interval in which $k$ lies. In the current paper we make progress in both directions by showing that the conjecture is false even when restricted to cubic planar graph. We do this by constructing an infinite family of cubic planar graphs, such that for every $p \geq 0$ there exists a graph in our family on $16+6p$ vertices containing two vertices such that there is no path between them whose length is in $\{3,5,7,\ldots,9+4p\}$.

%Motivated by a remark of Thomassen about the existence of hamiltonian-connected graphs on $n$ vertices without cycles of length $n-1$, we also describe an infinite family of hamiltonian-connected graphs with many missing cycle lengths relative to their orders.
\smallskip

\textbf{MSC 2020}: 05C38, 05C45, 05C10, 05C85

\smallskip

\textbf{Keywords}: Hamiltonian-connected, Path length distribution, planar graph, cubic graph, counterexample
%\keywords{Eulerian and Hamiltonian graphs 05C45 \and Graph algorithms 05C85   \and Paths and cycles 05C38 \and Planar graphs 05C10} \Car{Keywords and MSC's are two different things. The MSC's are fine. As keywords I'd give something like: Hamiltonian-connected, Path length distribution, planar graph, cubic graph, counterexample} \Jor{This suggestion seems fine to me.} \Jan{For me as well.}
\end{abstract}
\section{Introduction}
In a given graph $G$, a path between vertices $u$ and $v$ in $G$ will be called a \textit{$uv$-path}. A path or cycle in $G$ that visits every vertex of $G$ is \textit{hamiltonian}, and if $G$ contains such a path (cycle), $G$ is \textit{traceable} (\textit{hamiltonian}). $G$ is \textit{hamiltonian-connected} if for every pair of distinct vertices $u$ and $v$ of $G$ there is a hamiltonian $uv$-path. For a path $P$, its \textit{length} is $|E(P)|$. In this paper we are interested in the following conjecture.
\begin{conj}[Faudree and Schelp~\cite{FS}]
For every hamiltonian-connected graph $G$ on $n$ vertices and every pair of distinct vertices $u, v$ in $G$, there exists a path between $u$ and $v$ of length $k$ for every $k$ with $\frac{n}{2} \leq k \leq n-1$.
\end{conj}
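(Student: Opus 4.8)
The plan is not to prove this conjecture but to refute it: as the abstract announces, the paper will exhibit an explicit infinite family of hamiltonian-connected cubic planar counterexamples. For each integer $p \ge 0$ I would construct a graph $G_p$ on $n = 6p+16$ vertices together with two distinguished vertices $u$ and $v$, and then establish two facts that at first look incompatible: that $G_p$ is hamiltonian-connected, and that $G_p$ has no $uv$-path whose length is odd, greater than $1$, and at most $4p+9$. Since $n/2 = 3p+8 \le 4p+9$, some forbidden odd lengths lie inside the interval $[n/2,\,n-1]$ in which the conjecture promises a $uv$-path (for instance the odd value $4p+9$ always does), and their absence is the desired contradiction.

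For the construction I would follow the repetitive cubic-planar skeleton suggested by the vertex counts: a fixed ``core'' on $16$ vertices into which $p$ identical gadgets, each contributing $6$ vertices, are spliced in a chain so that $3$-regularity and planarity are preserved. The parameter $p$ then stretches the forbidden interval linearly, since each inserted gadget adds a fixed \emph{even} number of vertices and so pushes the shortest available odd $uv$-path a constant amount further out, while $u$ and $v$ are placed at distinguished spots of the core so that every short $uv$-path must thread through the whole chain.

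The mechanism I expect to drive both halves is a \emph{near-bipartition}. I would arrange $G_p$ to be bipartite except for a small, localized set of ``odd'' edges, with $u$ and $v$ in the same part. Then any $uv$-path using no odd edge has even length, so to realise an odd length a path must reach the odd edges; placing these far from $u$ and $v$ forces such paths to be long, which simultaneously explains why short odd lengths are absent and why long odd lengths --- in particular the hamiltonian one, of odd length $6p+15 > 4p+9$ --- still occur. Pleasingly, the same localized odd edges are exactly what make hamiltonian-connectedness possible for same-part pairs (a genuinely bipartite graph on more than two vertices is never hamiltonian-connected), so the feature that creates the gap is also the feature that rescues the global property; one only needs them far from $u$ and $v$.

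The main obstacle is this simultaneous control. Hamiltonian-connectedness is a strong global demand --- a path between \emph{every} pair --- and I would discharge it by exhibiting, for each type of endpoint pair, an explicit hamiltonian path built from a bounded family of routing patterns through the gadgets, verified uniformly in $p$ by induction or telescoping; a gadget rigid enough to enforce the length invariant can easily break this, and reconciling the two is where most of the work lies. I would expect the correct $16$-vertex core and $6$-vertex block to be found first by computer search over small cubic planar graphs, and only afterwards packaged into the general parity-and-length argument sketched above.
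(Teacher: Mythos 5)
Your plan gets the goal (refutation, not proof), the parameters, and even the outer architecture right: a fixed core plus a chain of $6$-vertex gadgets, cubic and planar, hamiltonian-connectedness proved by induction over explicit routings, gadgets first found by computer. All of this matches the paper's family $H_k$ on $16+6k$ vertices. The genuine gap is the engine you propose for the missing lengths: the near-bipartition mechanism cannot deliver the claimed parameters, so the two halves of your argument can never be reconciled. Concretely, suppose $G_p$ is bipartite apart from a localized set $F$ of ``odd'' edges and $u,v$ lie in the same colour class. Since $F$ is localized, distances to its endpoints in $G_p$ and in its bipartite part agree up to an additive constant, and killing every odd length up to $4p+9$ then forces every endpoint of every odd edge to be at distance roughly $2p+4$ from \emph{both} $u$ and $v$ (an odd $uv$-path must reach an odd edge and return). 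But hamiltonian-connected graphs on at least $4$ vertices are $3$-connected, so each sphere $S(u,i)$ below the eccentricity has at least $3$ vertices; with $n=6p+16$ this gives eccentricity at most $2p+5$, hence $|B(u,2p+3)|\geq 6p+10$, leaving at most $6$ vertices for the far region --- which must contain at least four odd-edge endpoints (a run-counting parity argument on hamiltonian paths shows a single odd edge never suffices; one needs, e.g., an odd edge in each class or a linear forest of three in one class). Consequently all but at most two of the spheres $S(u,1),\dots,S(u,2p+3)$ have exactly $3$ vertices and none contains an edge: $G_p$ is forced to be essentially a bipartite tube of width $3$ with two tiny caps. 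Such a tube is \emph{not} hamiltonian-connected: take $a,b$ in a pure $3$-vertex sphere $S_i$ at even distance from $u$ with all earlier spheres pure; $S_i$ is an independent $3$-cut, so in a hamiltonian $ab$-path the endpoints have degree $1$ and the third vertex $c$ degree $2$, forcing the shape ``$a$, all of the $u$-side, $c$, all of the far side, $b$''; but the $u$-side together with $\{a,c\}$ is bipartite with \emph{equal} colour classes, while a hamiltonian path joining two same-colour vertices needs the classes to differ by one --- impossible. Since at most two spheres can be irregular and there are about $p$ even-indexed spheres, hamiltonian-connectedness fails for all large $p$. In short, at the ratio you need (gap $\approx \tfrac{2}{3}n$) the feature creating your gap provably destroys the global property.

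The paper escapes this because it does not use parity at all. Its two special vertices $a''$ and $c''$ are \emph{adjacent} and lie on a triangle, and the graphs $H_k$ are riddled with odd cycles. The missing odd lengths $\{3,5,\dots,9+4k\}$ come from an inductive projection argument: the six new vertices are spliced in by an operation replacing the edges $ac$ and $bc$ of $H_{k-1}$, and a short case analysis shows that any $a''c''$-path of forbidden length in $H_k$ would yield an $ac$-path of forbidden (shorter by $4$ or $6$) length in $H_{k-1}$, contradicting the induction hypothesis; hamiltonian-connectedness survives the same operation by explicit path surgery. Note also a telltale inconsistency in your setup: same-class vertices are never adjacent, so your $u,v$ would miss length $1$ as well, whereas the clause ``odd length greater than $1$'' that you copied from the abstract is tailored precisely to an adjacent pair. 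So beyond the fact that no explicit construction is exhibited, the central idea you rely on must be replaced by something like the paper's projection argument rather than repaired.
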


Unless explicitly stated otherwise, the order of a given graph will be denoted by~$n$. We say that a graph $G$ \textit{has property} ${\mathfrak P}_k$ if for any two distinct vertices $u$ and $v$ in $G$ there is a $uv$-path in $G$ with $k$ vertices, i.e.\ of length $k - 1$. Thus, a graph is hamiltonian-connected if and only if it satisfies ${\mathfrak P}_n$. In this notation, the conjecture of Faudree and Schelp states that ${\mathfrak P}_n$ implies $\bigwedge_{\frac{n}{2}+1 \le k \le n} {\mathfrak P}_k$.%{\mathfrak P}_k$ for all $k$ with $n/2 + 1 \le k \le n$. 

%A \textit{block} is a 2-connected graph, and the \textit{square} of a graph $G$ is obtained by adding edges between any two vertices at distance 2 in $G$.

In their paper, Faudree and Schelp prove the conjecture when restricted to graphs that are the the square of a block, and  several other results were published which support the conjecture~\cite{FRS73,FS74,FS,F76}. However, in the general case, Thomassen~\cite{thomassen1978counterexamples} described an infinite family of counterexamples. (We remark that these graphs described by Thomassen also provide counterexamples to~\cite[Conjectures 3.1 and 3.4]{almohanna2018hamiltonian}.) Whenever a conjecture is disproven, it is interesting to investigate whether the conjecture can be adapted to obtain a correct statement either by restricting the class of graphs or by weakening the property that was claimed. For example, for the conjecture of Faudree and Schelp one could try to restrict the class of graphs to be planar or regular or adapt the interval in which the path lengths lie. In the present paper we present progress in both directions.

The rest of this article is structured as follows. We end the introduction by mentioning related work. In Section~\ref{sec:planar_counterexamples} we show that the smallest counterexample to the Faudree-Schelp conjecture is surprisingly small: there exists a cubic planar $8$-vertex hamiltonian-connected graph having two distinct vertices with no path of length~$4$ between them (whereas Thomassen's smallest counterexample has $27$ vertices). Moreover, we show that the Faudree-Schelp conjecture is false in a strong sense by constructing an infinite family of cubic planar graphs, such that for every integer $p \geq 1$ there exists a graph in our family on $16+6p$ vertices containing two vertices such that there is no path between them whose length is in $\{3,5,7,\ldots,9+4p\}$. We point out that Thomassen's counterexamples are very different in nature from ours, as his examples do not satisfy ${\mathfrak P}_{n-1}$. The examples are thus at two `ends' of the spectrum. What happens in-between is largely unknown, i.e.\ it is possible that a more conservative version of the Faudree-Schelp Conjecture, with a smaller set of ${\mathfrak P}_k$'s that must be satisfied, does hold. 

Thomassen states that, as a consequence of the Faudree-Schelp Conjecture, every hamiltonian-connected graph would be the union of its $(n-1)$-cycles. (Note that more generally, for $\ell \ge 3$, $G$ satisfying ${\mathfrak P}_\ell$ implies that every edge of $G$ is contained in a cycle of length $\ell$, so $G$ is the union of its $\ell$-cycles.) In~\cite{thomassen1978counterexamples} Thomassen describes a hamiltonian-connected graph with $n$ vertices and an edge that is not contained in any cycle of length $n - 1$. He goes on to write that maybe there even exists a hamiltonian-connected graph containing no cycle of length $n - 1$ at all. Motivated by this remark, in Section~\ref{sec:porous_cycle_spectrum} we present an infinite family of hamiltonian-connected graphs whose cycle spectrum---the \textit{cycle spectrum} of a graph $G$ is the set of integers $\ell$ such that $G$ contains an $\ell$-cycle---has size 2/3 of the graphs' order, noting that all gaps occur in the first half of the cycle spectrum. Gaps in the second half of the spectrum elude us. Finally, in Section~\ref{sec:conclusion} we discuss additional questions that, we believe, deserve attention.

%\subsection{Related work}
%\label{sec:relatedWork}
We refer the reader to~\cite{survey} for more related work and a survey on path length distributions in graphs that satisfy $\mathfrak{P}_k$ for certain values of $k$. We close this section by %discussing some related work. Recently, Almohanna, Olejniczak, and Zhang conjectured~\cite[Conjecture 2.4]{almohanna2018hamiltonian}: \textquote{For each $n \ge 6$, the smallest size of a graph of order $n$ satisfying $\mathfrak{P}_k$ for every integer $k$ with $2 \le k \le n - 1$ is $2n - 2$. Furthermore, the wheel $W_n$ of order $n$ is the only graph of order $n$ and smallest size that satisfies $\mathfrak{P}_k$ for every $2 \le k \le n - 1$.} We refer the interested reader to~\cite{survey} for a survey on path length distributions in graphs that satisfy $\mathfrak{P}_k$ for certain values of $k$.  
mentioning two old and likely difficult problems in the area which we find intriguing. Kotzig~\cite{K79} conjectured in the seventies that for every integer $k \ge 3$, there is no graph such that between every pair of vertices there is exactly one path of length $k$. For more context on this interesting conjecture, see~\cite{W16}. Another appealing structural problem is due to Thomassen, who mentions in~\cite{thomassen1978counterexamples} that we do not know whether $\bigwedge_{k \ge 3, k \ne n - 1} \mathfrak{P}_k$ implies $\mathfrak{P}_{n-1}$.

\section{Cubic planar counterexamples to the Faudree-Schelp Conjecture}
\label{sec:planar_counterexamples}

Straightforward computations yield that there are three pairwise non-isomorphic counterexamples to the Faudree-Schelp Conjecture on eight vertices, see Fig.~\ref{fig:smallest}. It is easy to verify that these graphs are hamiltonian-connected and that they contain vertices (in Fig.~\ref{fig:smallest} labelled $3$ and $4$) between which there is no path of length $4$. We checked with a computer that no smaller counterexamples exist\footnote{The code to replicate this verification can be found on Github~\cite{github}.}. We refer the interested reader to the Appendix for more details on the algorithms that were used as well as additional computational results that led to new counterexamples in various classes of graphs. We summarise:

\begin{proposition}
The graphs shown in Fig.~\ref{fig:smallest} have property ${\mathfrak P}_{n}$ but not property ${\mathfrak P}_{n/2+1}$. These are the smallest counterexamples to the Faudree-Schelp Conjecture.
\end{proposition}

%The reader can easily verify both claims about the bicorn. \Car{<- We're saying this twice}\Mic{I feel having it in the first location makes most sense? So I've removed it from this sentence. I hope it is not too ambiguous.} 

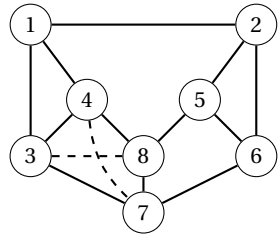
\begin{figure}
\vspace{-5mm}
    \centering
    \begin{tikzpicture}[main_node/.style={circle,draw,minimum size=1em,inner sep=3pt},scale=0.5]

\node[main_node] (0) at (-6, 0) {1};
\node[main_node] (1) at (0, 0) {2};
\node[main_node] (2) at (-6, -3.5) {3};
\node[main_node] (3) at (-4.5, -2) {4};
\node[main_node] (4) at (-1.5, -2) {5};
\node[main_node] (5) at (0, -3.5) {6};
\node[main_node] (6) at (-3, -5) {7};
\node[main_node] (7) at (-3, -3.5) {8};

 \path[draw, thick]
(0) edge node {} (1) 
(0) edge node {} (2) 
(0) edge node {} (3) 
(1) edge node {} (4) 
(1) edge node {} (5) 
(2) edge node {} (3) 
(2) edge node {} (6) 
(3) edge node {} (7) 
(4) edge node {} (5) 
(4) edge node {} (7) 
(5) edge node {} (6) 
(6) edge node {} (7) 
;

 \path[draw,thick, dashed]
 (2) edge node {} (7)
  (3) edge [bend right=20] node {} (6)
 ;
\end{tikzpicture}
    \caption{An $8$-vertex graph that does not satisfy ${\mathfrak P}_{5}$. It is the smallest counterexample to the Faudree-Schelp Conjecture, both in terms of order and size, and happens to be cubic and planar. It retains the property of being a counterexample even if at least one of the dashed edges is added, but if both are added, planarity is lost. No other non-isomorphic counterexamples of order 8 exist.}
    \label{fig:smallest}
    \vspace{-5mm}
\end{figure}

\begin{comment}
    \begin{lemma}
    A bicorn has no path of length $n/2$ between vertices $3$ and $4$ and symmetrically between vertices $5$ and $6$.
\end{lemma}

\begin{proof}
    We will prove this only for vertices $3$ and $4$, a symmetrical argument can be made for vertices $5$ and $6$. As vertex $1$ has degree three any path using edge $3,1$ or $4,1$ either uses both, creating a path of length $2$ or uses edge $1,2$. In this latter case it is easy to see that the shortest possible path has length $n-2/2-1 +3 = n/2+1$ or thus any such path is longer than $n/2$. Other paths must use both edge $3,7$ and $4,8$. Such a path has an odd length if the length is shorter than $n-1$. As $n/2$ with $n=8$ is even, no path of length $n/2$ exists.
\end{proof}
\end{comment}

Henceforth, in a graph containing vertices $u$ and $v$, we denote by $P_{uv}$ a $uv$-path which, notation-wise, we view as an ordered sequence of vertices $u, \ldots, v$. Moreover, when we make use of such a path $P_{uv}$ without further specifying its internal vertices, we always mean that there exists a suitable $uv$-path for the given context. By $\overleftarrow{P_{uv}}$ we denote the aforementioned path $P_{uv}$ in reverse order, i.e.\ a particular $vu$-path.

\subsection{Cubic planar counterexamples with pairs of vertices missing many path lengths}

Let $H_0$ be the cubic planar $16$-vertex-graph shown in Fig.~\ref{fig:smallest-k}. For each integer $k \geq 1$, we define the graph $H_k$ on $16+6k$ vertices as the graph obtained by taking $H_{k-1}$ and replacing a graph induced by seven vertices (isomorphic with the graph shown on the left of Fig.~\ref{fig:expansionMoreComplicated}) by the graph shown on the right of Fig.~\ref{fig:expansionMoreComplicated}. Note that $H_{k-1}[\{a,b,c,d,e,f,g\}]$ is isomorphic with $H_k[\{a',a'',b',b'',c,c',c''\}]$, so one can always find seven such vertices. Clearly, $H_k$ is planar and cubic. In the next three auxiliary results we first summarise some useful properties of $H_0$, leaving the straightforward proof to the reader, and then show that $H_k$ is hamiltonian-connected and that $H_k$ contains two vertices such that there are many $k$ for which there is no path of length $k$ between them.

\begin{lemma}
\label{lem:baseCaseH}
    $H_0$ is hamiltonian-connected and there is no path of length $3$, $5$, $7$ or $9$ between vertices $a$ and $c$.
\end{lemma}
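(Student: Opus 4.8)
My overall strategy is to reduce both claims—hamiltonian-connectedness and the nonexistence of short $ac$-paths—to a finite verification on the fixed $16$-vertex graph $H_0$, organized so that structural observations keep the case analysis manageable. Since $H_0$ is a single explicit graph, in principle one could simply enumerate, but the point is to present a human-checkable argument. First I would fix coordinates and labels from Fig.~\ref{fig:smallest-k}, record the cubic structure (every vertex has degree $3$), and identify any symmetries (automorphisms) of $H_0$; these symmetries will cut the number of vertex pairs one must treat for $\mathfrak{P}_n$ roughly by the size of the automorphism group, and similarly constrain the $ac$-path analysis.

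\medskip
\textbf{Hamiltonian-connectedness.} To establish that $H_0$ satisfies $\mathfrak{P}_{16}$, I would exhibit, for each orbit of ordered vertex pairs $(u,v)$ under $\mathrm{Aut}(H_0)$, one explicit hamiltonian $uv$-path, written as an ordered vertex sequence $u,\dots,v$ in the $P_{uv}$ notation introduced just before the lemma. Because $H_0$ is cubic, at each vertex the path uses exactly two of the three incident edges, which sharply limits how a hamiltonian path can thread the graph and makes each candidate path easy to verify by inspection. The reader is explicitly invited to check these; so the burden here is organizational (choosing a good set of orbit representatives and listing paths) rather than deep.

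\medskip
\textbf{No $ac$-path of length $3,5,7,9$.} This is the substantive part. A length-$\ell$ $ac$-path has $\ell+1$ vertices, so lengths $3,5,7,9$ correspond to $ac$-paths on $4,6,8,10$ vertices. The plan is to argue via the local structure around $a$ and $c$: examine the (at most three) neighbours of $a$ and of $c$, and use the cubic constraint together with the placement of $a$ and $c$ in the planar embedding to show that any $ac$-path must either be very short (length $1$, excluded since $a\not\sim c$ or trivially too short) or must "escape" through a bottleneck that forces it to pick up a controlled number of extra vertices, giving a parity/length obstruction. Concretely, I would look for a small vertex cut or a gadget separating $a$ from $c$ such that every $ac$-path crossing it accrues a fixed parity and a minimum number of interior vertices; if the reachable short paths all have even length (say), that rules out the odd lengths $3,5,7,9$ simultaneously. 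The commented-out draft lemma in the excerpt (treating a "bicorn" and arguing that using a degree-$3$ hub forces paths of length $\ge n/2+1$, while alternative routes are forced to be odd and long) signals exactly this parity-and-bottleneck technique, and I expect the $H_0$ argument to mirror it.

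\medskip
\textbf{Main obstacle.} The hard part is the lower-bound/nonexistence direction: proving that \emph{no} $ac$-path of each forbidden length exists requires ruling out all routings, not just failing to find one. The cleanest route is to pin down a separating structure whose crossing cost forces a parity constraint (excluding all odd lengths up to $9$ at once) plus a minimum-length constraint (excluding the small even-parity exceptions), so that the four lengths fall out of one or two uniform observations rather than four independent exhaustions. Identifying that separating gadget correctly in $H_0$—and verifying it genuinely blocks every alternative route—is where the real care is needed; the finiteness of $H_0$ guarantees the argument terminates, but a sloppy case split risks missing a routing.
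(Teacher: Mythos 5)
There is a genuine gap: what you have written is a plan for a proof, not a proof. Neither half of the lemma is actually established. For hamiltonian-connectedness you describe an organizational scheme (orbit representatives under $\mathrm{Aut}(H_0)$, one explicit path per orbit) but exhibit no paths and do not even determine the automorphism group; the paper likewise delegates this part to the reader, so the real burden of the lemma is the second half. There, you correctly guess the right technique---a bottleneck plus parity argument---but you never identify the bottleneck, never establish any parity fact, and never run the case analysis. In $H_0$ the separating structure is concrete: the seven vertices $\{a,b,c,d,e,f,g\}$ induce a gadget joined to the remaining nine vertices by exactly three edges (one each at $e$, $f$ and $g$), so any $ac$-path either stays inside the gadget or exits and re-enters exactly once through two of $\{e,f,g\}$. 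The paper's argument then proceeds in two steps: (i) all $ac$-paths inside the gadget have length $1$, $2$, $4$ or $6$; (ii) for a path of length $5$, $7$ or $9$ that leaves the gadget, a four-way case analysis on the exit/entry pattern (forms such as $aP_{fg}dc$, $aP_{fe}bc$, $abP_{ef}gdc$, $abP_{eg}dc$) shows the outside subpath would need a number of internal vertices of a parity that the outside $9$-vertex piece cannot supply. Your proposal contains none of steps (i) or (ii).

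Moreover, one of your structural assumptions is wrong and would derail the argument as you envision it: you write that length $1$ is ``excluded since $a\not\sim c$,'' but $a$ and $c$ \emph{are} adjacent in $H_0$ (this is exactly why Lemma~\ref{lem:LengthH} speaks of two \emph{adjacent} vertices). Consequently there are $ac$-paths of odd lengths $1$ and $15$, so no uniform parity obstruction of the kind you hope for (``all reachable short paths have even length'') can exist for $ac$-paths as a whole. The parity obstruction is genuinely local: it applies only to paths that cross the three-edge bottleneck, and it must be combined with the separate, exhaustive observation that gadget-internal paths have lengths $1,2,4,6$. Without recognizing that the argument splits along these two regimes---and without verifying either one---the proposal cannot be completed as stated.
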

\begin{comment}
\begin{proof}
    We leave it to the reader to verify that $H_0$ is hamiltonian-connected, but briefly focus on the second part. There is clearly no $ac$-path of length $3$. All $ac$-paths whose edges are all contained in $H_0[\{a,b,c,d,e,f,g\}]$ have length $1, 2, 4$ or $6$. Therefore, if $P$ is an $ac$-path of length $5$, $7$ or $9$, it must contain at least one edge in $E(H_0[V(H_0) \setminus \{a,b,c,d,e,f,g\}])$. Now there are a few cases to consider about how $P$ could look like:
    \begin{itemize}
    \item $P$ is of the form $aP_{fg}dc$ or $aP_{fg}debc$. Then $P_{fg}$ must contain $1,3,5$ or $7$ vertices without containing $a$, $d$ and $c$, but this is not possible.
    \item $P$ is of the form $aP_{fe}bc$ or $aP_{fe}dc$. Then $P_{fe}$ must contain $3, 5$ or $7$ vertices without containing $a$ and $c$, but this is not possible.
    \item $P$ is of the form $abP_{ef}gdc$. Then $P_{ef}$ must contain $1,3$ or $5$ vertices without containing $a$, $b$, $g$, $d$ and $c$, but this is not possible.
    \item $P$ is of the form $abP_{eg}dc$. Then $P_{eg}$ must contain $2,4$ or $6$ vertices without containing $a$, $b$, $d$ and $c$, but this is not possible.
    \end{itemize}
    Each case leads to a contradiction, so this concludes the proof.
\end{proof}
\end{comment}

\begin{figure}
\vspace{-5mm}
    \centering
    \begin{tikzpicture}[main_node/.style={circle,draw,minimum size=1em,inner sep=1pt}]

% Nodes
\node[main_node] (0) at (-2, 2) {};
\node[main_node] (1) at (-3, 1) {};
\node[main_node] (2) at (-1, 2) {};
\node[main_node] (3) at (0, 1) {};
\node[main_node] (4) at (-5, 3) {};
\node[main_node] (5) at (-4, 3) {};
\node[main_node] (6) at (-3, 3) {};
\node[main_node] (7) at (-4.5, 4) {};
\node[main_node] (8) at (-3.5, 4.5) {};
\node[main_node] (9) at (-0.5, 3) {$f$};
\node[main_node] (10) at (0.25, 3) {$a$};
\node[main_node] (11) at (0.75, 2.5) {$b$};
\node[main_node] (12) at (1.5, 2.5) {$e$};
\node[main_node] (13) at (0.75, 3.5) {$c$};
\node[main_node] (14) at (1.5, 3.5) {$d$};
\node[main_node] (15) at (0, 4.5) {$g$};

% Edges
\path[draw, thick]
(7) edge node {} (5)
(5) edge node {} (4)
(4) edge node {} (7)
(7) edge node {} (8)
(8) edge node {} (6)
(6) edge node {} (5)
(15) edge node {} (14)
(14) edge node {} (13)
(14) edge node {} (12)
(11) edge node {} (12)
(11) edge node {} (13)
(13) edge node {} (10)
(10) edge node {} (11)
(15) edge node {} (9)
(9) edge node {} (10)
(9) edge node {} (2)
(3) edge node {} (12)
(2) edge node {} (3)
(0) edge node {} (2)
(1) edge node {} (3)
(6) edge node {} (0)
(1) edge node {} (4)
(0) edge node {} (1)
(8) edge node {} (15)
;

\end{tikzpicture}
    \caption{The graph $H_0$ is a counterexample to the Faudree-Schelp Conjecture.}
    \label{fig:smallest-k}
\end{figure}
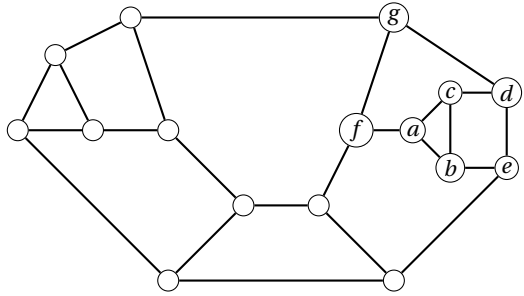

\begin{figure}
    \centering
    \begin{tikzpicture}[main_node/.style={circle,draw,minimum size=1em,inner sep=1pt},scale=0.7]

\node[main_node,draw=none] (0) at (-4, -3) {};
\node[main_node] (1) at (-3, -1) {$a$};
\node[main_node,draw=none] (2) at (0, 2) {};
\node[main_node] (3) at (-2, 2) {$g$};
\node[main_node] (4) at (0, 0) {$d$};
\node[main_node] (5) at (-1.5, 0) {$c$};
\node[main_node] (6) at (-1.5, -2) {$b$};
\node[main_node] (7) at (0, -2) {$e$};
\node[main_node] (8) at (-4, -1) {$f$};
\node[main_node,draw=none] (9) at (0, -3) {};

% Edges between nodes
\path[draw, thick]
    (5) edge node {} (1)
    (1) edge node {} (6)
    (6) edge node {} (5)
    (0) edge node {} (8)
    (8) edge node {} (1)
    (3) edge node {} (8)
    (3) edge node {} (4)
    (4) edge node {} (5)
    (6) edge node {} (7)
    (4) edge node {} (7)
    (2) edge node {} (3)
    (7) edge node {} (9);

\node[main_node,draw=none] (20) at (6, -3) {};
\node[main_node] (21) at (7, -2) {$a$};
\node[main_node,draw=none] (22) at (10, 3) {};
\node[main_node] (23) at (8, 3) {$g$};
\node[main_node] (24) at (10, 2) {$d$};
\node[main_node] (25) at (9, 2) {$c$};
\node[main_node] (26) at (10, -2) {$b$};
\node[main_node] (27) at (12, -2) {$e$};
\node[main_node] (28) at (6, -2) {$f$};
\node[main_node,draw=none] (29) at (12, -3) {};
\node[main_node] (210) at (7.5, -0.25) {$a'$};
\node[main_node] (211) at (9, 0.5) {$c''$};
\node[main_node] (212) at (9, -1) {$b''$};
\node[main_node] (213) at (10, 0.5) {$c'$};
\node[main_node] (214) at (10, -1) {$b'$};
\node[main_node] (215) at (8.5, -0.25) {$a''$};

% Edges
\path[draw, thick]
(21) edge node {} (26)
(20) edge node {} (28)
(28) edge node {} (21)
(23) edge node {} (28)
(23) edge node {} (24)
(24) edge node {} (25)
(26) edge node {} (27)
(24) edge node {} (27)
(22) edge node {} (23)
(27) edge node {} (29)
(210) edge node {} (21)
(210) edge node {} (25)
(25) edge node {} (213)
(213) edge node {} (211)
(211) edge node {} (212)
(212) edge node {} (214)
(214) edge node {} (213)
(211) edge node {} (215)
(215) edge node {} (212)
(210) edge node {} (215)
(214) edge node {} (26);

    \draw [->,very thick] (1.5,0) -- (4.5,0);

\end{tikzpicture}
    \caption{$H_k$ is obtained by starting from $H_{k-1}$ and performing the operation shown here.}
    \label{fig:expansionMoreComplicated}
\end{figure}
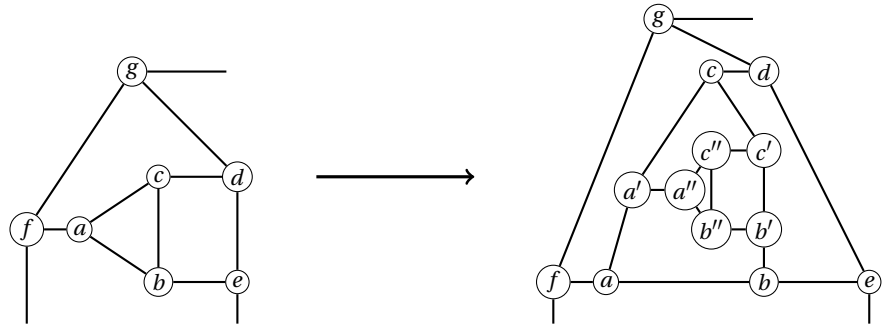

%We now show that $H_k$ is hamiltonian-connected:
\begin{lemma}
\label{lem:HamConnH}
    For each integer $k \geq 0$, $H_k$ is hamiltonian-connected.
\end{lemma}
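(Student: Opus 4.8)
The plan is to induct on $k$, the base case $k=0$ being exactly Lemma~\ref{lem:baseCaseH}. For the inductive step I would first isolate what actually changes: by construction $H_k$ is obtained from $H_{k-1}$ by replacing the seven-vertex gadget $Q=\{a,b,c,d,e,f,g\}$ (left of Fig.~\ref{fig:expansionMoreComplicated}) by the thirteen-vertex gadget $Q'$ (right of Fig.~\ref{fig:expansionMoreComplicated}), and crucially the \emph{interface} to the rest of the graph is untouched: in both graphs the gadget meets the remaining graph $R:=H_{k-1}-Q=H_k-Q'$ in exactly the three edges leaving $e$, $f$ and $g$. It is convenient to read the operation as expanding the cubic vertex $c$ into a fresh copy of the base gadget, which is consistent with the stated isomorphism $H_{k-1}[\{a,b,c,d,e,f,g\}]\cong H_k[\{a',a'',b',b'',c,c',c''\}]$; this self-similarity will let me recycle part of the verification.

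The heart of the argument is a \emph{gadget-replacement principle}. Fix distinct $u,v$ and consider a Hamiltonian path. Since the gadget is attached to $R$ only through the three edges at $e,f,g$, the trace of any Hamiltonian path on the gadget is a linear forest covering every gadget vertex whose path-endpoints are terminals (where the path crosses the interface) or the global endpoints $u,v$ should these lie in the gadget; a parity count on the three interface edges then leaves only a short, explicit list of boundary patterns. The main ones are: (A) a single Hamiltonian traversal between one pair of terminals (used when $u,v\in R$); (B) a single spanning path from an interior endpoint to one terminal; (C) a spanning pair of paths, one joining two terminals and one ending at an interior endpoint, used when all three interface edges are traversed; and (D) the analogous spanning pair with both $u,v$ interior. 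I would enumerate these patterns precisely.

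Next I would verify, by direct inspection of the thirteen-vertex gadget $Q'$, that $Q'$ realises every boundary pattern that $Q$ realises, now additionally covering the six new vertices, and moreover realises the extra instances of patterns (B)--(D) needed to place a path-endpoint at each of the six new interior vertices $a',a'',b',b'',c',c''$. Because the block $B=\{a',a'',b',b'',c,c',c''\}$ is an isomorphic copy of the base gadget, many of these spanning systems for $Q'$ can be assembled from the corresponding systems for $B$ together with a fixed routing through $a,b,d,e,f,g$, which keeps the case-check finite and organised. With this in hand the gluing is routine: for $u,v\in R$, take a Hamiltonian $uv$-path in $H_{k-1}$ (inductive hypothesis), whose gadget trace is a pattern-(A) traversal, and replace that traversal by the matching Hamiltonian traversal of $Q'$; for $u$ and/or $v$ interior to the gadget, start from a Hamiltonian path of $H_{k-1}$ with proxy endpoints chosen in $Q$ and reroute only inside the gadget region, leaving $R$ and the pattern of used interface edges untouched, so that the new endpoints fall on the desired vertices of $Q'$.

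The main obstacle is precisely the bookkeeping for endpoints interior to the gadget. The inductive hypothesis yields \emph{some} Hamiltonian path of $H_{k-1}$, but not one with a prescribed interface behaviour, so I must ensure that whatever pattern the old gadget exhibits can be matched by $Q'$ with its endpoint steered to the required new vertex, all while the portion of the path lying in $R$ is reused verbatim. Guaranteeing that this portion is a single usable segment (equivalently, controlling how many interface edges the inductive-hypothesis path uses) is the delicate point; if it cannot be controlled directly, I would strengthen the induction hypothesis to record the relevant three-terminal traceability of $R$ (for instance, that $R$ admits the spanning paths between interface-neighbours that patterns (B)--(D) require), and carry that strengthened statement through the induction. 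The remaining work---the pattern enumeration and the finite verification that $Q'$ meets every requirement---is mechanical but must be done carefully, and is where I expect essentially all of the effort to go.
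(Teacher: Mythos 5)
Your proposal is correct and is, at bottom, the same argument as the paper's: induction on $k$ with Lemma~\ref{lem:baseCaseH} as base case, surgery on hamiltonian paths of $H_{k-1}$, proxy endpoints when a prescribed endpoint is one of the six new vertices, and a finite explicit verification inside the enlarged gadget. The genuine difference is the granularity of the surgery. You cut along the three-edge interface at $e,f,g$ and classify whole-gadget traces (your patterns (A)--(D)), so each replacement must produce a spanning linear forest of the thirteen-vertex gadget from scratch; this abstraction makes it transparent why a finite check suffices, but it maximizes the size of that check. The paper instead regards $H_{k-1}-ac-bc$ as a subgraph of $H_k$, so the new vertices are reachable only through $a$, $b$, $c$, and it reroutes only the portion of the path that uses the triangle edges $ab,ac,bc$ (the replacements $\rho(ab),\ldots,\rho(cab)$), reusing everything else verbatim, which keeps the explicit lists short. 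For interior endpoints the paper uses exactly your proxy idea: proxy $b$ when one endpoint is a new vertex and the other lies outside the gadget region, and proxy pairs $(f,d)$, $(g,a)$, $(e,c)$ when both endpoints lie inside it; in the latter case it extracts from the proxy path a spanning path $P_{ge}$ (resp.\ $P_{fe}$, $P_{gf}$) of the outside graph plus two attachment vertices, which is precisely the ``three-terminal traceability'' you contemplate adding to the induction hypothesis. Note that no strengthening is needed: since the gadget is cubic, a hamiltonian $fd$-path (say) of $H_{k-1}$ can traverse it in essentially one way, so this traceability already follows from the unstrengthened hypothesis applied to proxy pairs. The one caveat is the one you yourself flag: essentially all of the mathematical content sits in the deferred ``mechanical'' verification that the thirteen-vertex gadget realizes every steered pattern --- the paper's proof consists mostly of exactly those explicit path lists --- and that verification does succeed, so your blueprint is sound but is a plan rather than a finished proof until that work is done.
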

\begin{proof}
We prove this by induction on $k$. The base case $k=0$ is given by Lemma~\ref{lem:baseCaseH}. Assume the statement holds for all $H_i$ with $i<k$. Let $H_k$ be constructed from $H_{k-1}$ as shown in Fig.~\ref{fig:expansionMoreComplicated}. The proof strategy is to show the existence of a hamiltonian path between each two distinct vertices $u$ and $v$ in $H_k$ by properly modifying certain long paths between two distinct vertices in $H_{k-1}$.

Throughout the proof, we see $H_{k-1} - ac - bc$ as subgraph of $H_k$. Let $u, v \in V(H_{k-1})$ be distinct vertices and let $P$ be a hamiltonian $uv$-path in $H_{k-1}$. For a subpath $S$ of $P$, we denote by $\rho(S)$ the path replacing $S$ in order to obtain a hamiltonian $uv$-path in $H_k$. We have $1 \le |E(P) \cap \{ab,ac,bc\}| \le 2$. We obtain a hamiltonian $uv$-path in $H_k$ by considering the replacements \begin{multline*}
\rho(ab) = aa'a''b''c''c'b'b, \ \rho(ac) = aa'a''c''b''b'c'c, \ \rho(bc) = bb'c'c''b''a''a'c,\\
\rho(abc) = abb'c'c''b''a''a'c, \ \rho(bca) = bb'b''a''c''c'ca'a, \ \rho(cab) = cc'b'b''c''a''a'ab.
\end{multline*}

%If $E(P) \cap \{ab,ac,bc\}=\{ab\}$, then the subpath $ab$ can be replaced by $aa'a''b''c''c'b'b$ to obtain a hamiltonian $uv$-path in $H_k$. If $E(P) \cap \{ab,ac,bc\}=\{ac\}$, then the subpath $ac$ can be replaced by $aa'a''c''b''b'c'c$ to obtain a hamiltonian $uv$-path in $H_k$. If $E(P) \cap \{ab,ac,bc\}=\{bc\}$, then the subpath $bc$ can be replaced by $bb'c'c''b''a''a'c$ to obtain a hamiltonian $uv$-path in $H_k$. If $P$ contains the subpath $abc$, then this subpath can be replaced by $abb'c'c''b''a''a'c$ to obtain a hamiltonian $uv$-path in $H_k$. If $P$ contains the subpath $bca$, then this subpath can be replaced by $bb'b''a''c''c'ca'a$ to obtain a hamiltonian $uv$-path in $H_k$. If $P$ contains the subpath $cab$, then this subpath can be replaced by $cc'b'b''c''a''a'ab$ to obtain a hamiltonian $uv$-path in $H_k$.

Next, we show that there is a hamiltonian $uv$-path in $H_k$ for all $u \in \{a',a'',b',b'',c',$ $c''\}$ and $v \in V(H_k)\setminus\{a,a',a'',b,b',b'',c,c',c''\}$. Let $P$ be a hamiltonian $bv$-path in $H_{k-1}$. Note that $ac \in E(P)$. There are three cases to consider. 

\smallskip

\noindent \textsc{Case 1}: $bac$ is a subpath of $P$. We replace $bac$ by one of the following paths in order to obtain suitable hamiltonian $uv$-paths in $H_k$ (one for each aforementioned vertex $u$):
\vspace{-1mm}
%\begin{itemize}
%    \item $a'abb'b''a''c''c'c$
%    \item $a''b''c''c'b'baa'c$
%    \item $b'baa'a''b''c''c'c$
%    \item $b''b'baa'a''c''c'c$
%    \item $c'c''a''b''b'baa'c$
%    \item $c''b''a''a'abb'c'c$ 
%\end{itemize}
\begin{align*}
a'abb'b''a''c''c'c, \quad b'baa'a''b''c''c'c, \quad c'c''a''b''b'baa'c,\\
a''b''c''c'b'baa'c, \quad b''b'baa'a''c''c'c , \quad c''b''a''a'abb'c'c.
\end{align*}

\smallskip

\noindent \textsc{Case 2}: $bca$ is a subpath of $P$. In this situation, $v \ne f$. If $v=d$, then $P$ is of the form $bcP_{ae}d$. Then the following paths are hamiltonian $uv$-paths in $H_k$: 
\vspace{-1mm}
%\begin{align*}
%a'cc'c''a''b''b'baP_{fe}d, \quad b'b\overleftarrow{P_{fe}}aa'a''b''c''c'cd, \quad c'ca'a''c''b''b'baP_{fe}d,\\
%a''a'cc'c''b''b'baP_{fe}d, \quad b''c''a''a'cc'b'baP_{fe}d, \quad c''c'ca'a''b''b'baP_{fe}d.
%\end{align*}
%\Car{
\begin{align*}
a'cc'c''a''b''b'bP_{ae}d, \quad b'b\overleftarrow{P_{ae}}a'a''b''c''c'cd, \quad c'ca'a''c''b''b'bP_{ae}d,\\
a''a'cc'c''b''b'bP_{ae}d, \quad b''c''a''a'cc'b'bP_{ae}d, \quad c''c'ca'a''b''b'bP_{ae}d.
\end{align*}
%}
%\begin{itemize}
%    \item $a'cc'c''a''b''b'baP_{fe}d$
%    \item $a''a'cc'c''b''b'baP_{fe}d$
%    \item $b'b\overleftarrow{P_{fe}}aa'a''b''c''c'cd$
%    \item $b''c''a''a'cc'b'baP_{fe}d$
%    \item $c'ca'a''c''b''b'baP_{fe}d$
%    \item $c''c'ca'a''b''b'baP_{fe}d$
%\end{itemize}
If $v \neq d$, then $gde$ or $edg$ is a subpath of $P$. Now there are two subcases for $P$. In the first subcase, $P$ is of the form $bcaP_{fe}dP_{gv}$, then $bacdP_{ef}P_{gv}$ is also a hamiltonian $bv$-path in $H_{k-1}$. It contains $bac$ as a subpath and this was precisely the previous case. In the second subcase, $P$ is of the form $bcaP_{fg}dP_{ev}$. Then, putting $P_{bv}=bP_{ev}$, the following paths are hamiltonian $uv$-paths in $H_k$: %\Car{Seems to me we could look at $P_{bv}$ and shorten things a tiny bit (see blue cases)}
%\Jor{I agree: we just need to be a bit careful with the definition (it is a bit different here than for the previous case I think; we cannot just change the part "$P$ is of the form $bcaP_{fg}dP_{ev}$".) Instead we can write $P_{bv}=bP_{ev}$.}
%\vspace{-1mm}
%\begin{align*}
%a'aP_{fg}dcc'c''a''b''b'bP_{ev}, \ \ a''a'aP_{fg}dcc'c''b''b'bP_{ev}, \ \ b'c'c''b''a''a'cdP_{fg}abP_{ev},\\
%b''b'c'c''a''a'cd\overleftarrow{P_{fg}}abP_{ev}, \ \ c'b'b''c''a''a'cd\overleftarrow{P_{fg}}abP_{ev}, \ \ %c''c'b'b''a''a'cd\overleftarrow{P_{fg}}abP_{ev}.
%\end{align*}
%\Car{
\begin{align*}
a'aP_{fg}dcc'c''a''b''b'P_{bv}, \ \ a''a'aP_{fg}dcc'c''b''b'P_{bv}, \ \ b'c'c''b''a''a'cdP_{fg}aP_{bv},\\
b''b'c'c''a''a'cd\overleftarrow{P_{fg}}aP_{bv}, \ \ c'b'b''c''a''a'cd\overleftarrow{P_{fg}}aP_{bv}, \ \ c''c'b'b''a''a'cd\overleftarrow{P_{fg}}aP_{bv}.
\end{align*}
%}

%\begin{itemize}
%\item $a'aP_{fg}dcc'c''a''b''b'bP_{ev}$
%\item $a''a'aP_{fg}dcc'c''b''b'bP_{ev}$ 
%\item $b'c'c''b''a''a'cdP_{fg}abP_{ev}$
%\item $b''b'c'c''a''a'cd\overleftarrow{P_{fg}}abP_{ev}$
%\item $c'b'b''c''a''a'cd\overleftarrow{P_{fg}}abP_{ev}$
%\item $c''c'b'b''a''a'cd\overleftarrow{P_{fg}}abP_{ev}$
%\end{itemize}

\smallskip

\noindent \textsc{Case 3}: $be$ is a subpath of $P$. Note that $P$ cannot have $edg$ as subpath as this would imply $v = c$, which is impossible by the choice of $v$. There are four subcases for $P$ depending on whether $f$ or $g$ occurs first when following $P$ from $b$ to $v$. In the first, second, and third subcase, $P$ is of the form $bP_{ef}acdP_{gv}$ or $bP_{eg}dcaP_{fv}$ or $bP_{eg}facd$. In these subcases, the paths $bacdP_{ef}P_{gv}$ and $bacdP_{eg}P_{fv}$ and $bcaf\overleftarrow{P_{eg}}d$ are also hamiltonian $bv$-paths in $H_{k-1}$, which correspond exactly with Case 1, Case 1 and Case 2, respectively. In the fourth subcase, $P$ is of the form $bedcafP_{gv}$. Then, putting $P_{av}=afP_{gv}$, the following paths are hamiltonian $uv$-paths in $H_k$: %\Car{Seems to me we could look at $P_{av}$ and shorten things a tiny bit (see blue cases)}\Jor{I agree: we just need to be a bit careful with the definition. We can write $P_{av}=afP_{gv}$.}
%\vspace{-1mm}
%\begin{align*}
%a'a''c''b''b'c'cdebafP_{gv}, \quad a''b''c''c'b'bedca'afP_{gv}, \quad b'c'c''b''a''a'cdebafP_{gv},\\
%b''a''c''c'b'bedca'afP_{gv}, \quad c'cdebb'b''c''a''a'afP_{gv}, \quad c''c'cdebb'b''a''a'afP_{gv}.
%\end{align*}
%\Car{
\begin{align*}
a'a''c''b''b'c'cdebP_{av}, \quad a''b''c''c'b'bedca'P_{av}, \quad b'c'c''b''a''a'cdebP_{av},\\
b''a''c''c'b'bedca'P_{av}, \quad c'cdebb'b''c''a''a'P_{av}, \quad c''c'cdebb'b''a''a'P_{av}.
\end{align*}
%}
%\begin{itemize}
%\item $a'a''c''b''b'c'cdebafP_{gv}$
%\item $a''b''c''c'b'bedca'afP_{gv}$
%\item $b'c'c''b''a''a'cdebafP_{gv}$
%\item $b''a''c''c'b'bedca'afP_{gv}$
%\item $c'cdebb'b''c''a''a'afP_{gv}$
%\item $c''c'cdebb'b''a''a'afP_{gv}$ 
%\end{itemize}

Finally, we show that there is a hamiltonian $uv$-path in $H_k$ for all $u \in \{a',a'',b',b'',c',$ $c''\}$ and $v \in \{a,a',a'',b,b',b'',c,c',c''\}$ with $u \neq v$.

Let $P$ be a hamiltonian $fd$-path in $H_{k-1}$. $P$ is of the form $fP_{ge}bacd$ or $facbP_{eg}d$. Thus there exists a path in $H_{k-1}$ with endpoints $g$ and $e$ containing all vertices in $V(G)\setminus \{a,b,c,d,f\}$. We may assume without loss of generality that this path is $P_{ge}$. Then the following paths are the desired hamiltonian paths in $H_k$: %\Car{I don't quite understand the systematisation here; the paths below seem to sometimes be $vu$-paths, sometimes $uv$-paths. Why was this done in this manner?} \Jor{For each path, I looked at the string composed of consecutive nodes in that path as well as the reverse string obtained by reversing the path and then chose the one that is lexicographically smaller. In other words: the first character of each path is smaller than the last character of the path (smaller relative to the natural ordering $a,a',a'',b,b',b'',c,c',c''$). This choice made it a bit easier for me to check that all cases were covered, but if another order is preferable that is also fine for me.}
\vspace{-1mm}
\begin{align*}
a'cd\overleftarrow{P_{ge}}fabb'c'c''b''a'', \quad a'afP_{ge}dcc'c''a''b''b'b, \quad a'a''b''c''c'cd\overleftarrow{P_{ge}}fabb',\\
a'cd\overleftarrow{P_{ge}}fabb'c'c''a''b'', \quad a'a''b''c''c'b'bafP_{ge}dc, \quad a'cd\overleftarrow{P_{ge}}fabb'b''a''c''c',\\
a'a''b''b'bafP_{ge}dcc'c'', \quad a''a'afP_{ge}dcc'c''b''b'b, \quad a''a'cd\overleftarrow{P_{ge}}fabb'c'c''b'',\\
a''a'cd\overleftarrow{P_{ge}}fabb'b''c''c', \quad bafP_{ge}dca'a''b''c''c'b', \quad bb'c'cd\overleftarrow{P_{ge}}faa'a''c''b'',\\
bafP_{ge}dca'a''c''b''b'c', \quad bb'c'cd\overleftarrow{P_{ge}}faa'a''b''c'', \quad b'bafP_{ge}dca'a''b''c''c'\\
b''c''a''a'cd\overleftarrow{P_{ge}}fabb'c', \quad b''a''a'cd\overleftarrow{P_{ge}}fabb'c'c'', \quad c'b'bafP_{ge}dca'a''b''c''.
\end{align*}

%\begin{itemize}
 %   \item $a'cd\overleftarrow{P_{ge}}fabb'c'c''b''a''$
  %  \item $a'afP_{ge}dcc'c''a''b''b'b$
   % \item $a'a''b''c''c'cd\overleftarrow{P_{ge}}fabb'$  
  %  \item $a'cd\overleftarrow{P_{ge}}fabb'c'c''a''b''$
  %  \item $a'a''b''c''c'b'bafP_{ge}dc$
  %  \item $a'cd\overleftarrow{P_{ge}}fabb'b''a''c''c'$
      %\item $a'a''b''b'bafP_{ge}dcc'c''$
    %\item $a''a'afP_{ge}dcc'c''b''b'b$
    %\item $a''a'cd\overleftarrow{P_{ge}}fabb'c'c''b''$
    %   \item $a''a'cd\overleftarrow{P_{ge}}fabb'b''c''c'$
    %\item $bafP_{ge}dca'a''b''c''c'b'$
    %\item $bb'c'cd\overleftarrow{P_{ge}}faa'a''c''b''$
    %   \item $bafP_{ge}dca'a''c''b''b'c'$
    %\item $bb'c'cd\overleftarrow{P_{ge}}faa'a''b''c''$
    %\item $b'bafP_{ge}dca'a''b''c''c'$
    %    \item $b''c''a''a'cd\overleftarrow{P_{ge}}fabb'c'$
    %\item $b''a''a'cd\overleftarrow{P_{ge}}fabb'c'c''$
    %\item $c'b'bafP_{ge}dca'a''b''c''$
%\end{itemize}

Let $P$ be a hamiltonian $ga$-path in $H_{k-1}$. $P$ is of the form $gP_{fe}dcba$ or $gdcbP_{ef}a$. Thus there exists a path in $H_{k-1}$ with endpoints $e$ and $f$ containing all vertices in $V(G)\setminus \{a,b,c,d,g\}$. We may assume without loss of generality that this path is $P_{fe}$. Now the following paths are hamiltonian paths in $H_k$:
\vspace{-1mm}
\begin{gather*}
\begin{align*}
ab\overleftarrow{P_{fe}}gdcc'b'b''c''a''a', \quad aa'cdgP_{fe}bb'c'c''b''a'', \quad ab\overleftarrow{P_{fe}}gdca'a''b''c''c'b',\\
aa'cdgP_{fe}bb'c'c''a''b'', \quad aa'cdgP_{fe}bb'b''a''c''c', \quad ab\overleftarrow{P_{fe}}gdca'a''b''b'c'c'',\\
a''a'ab\overleftarrow{P_{fe}}gdcc'c''b''b', \quad a''a'ab\overleftarrow{P_{fe}}gdcc'b'b''c'', \quad b'c'cdgP_{fe}baa'a''c''b'',
\end{align*}\\[0.5mm]
\ \quad b'c'cdgP_{fe}baa'a''b''c''.
\end{gather*}

%\begin{itemize}
%\item $ab\overleftarrow{P_{fe}}gdcc'b'b''c''a''a'$
%\item $aa'cdgP_{fe}bb'c'c''b''a''$
%\item $ab\overleftarrow{P_{fe}}gdca'a''b''c''c'b'$
%\item $aa'cdgP_{fe}bb'c'c''a''b''$
%\item $aa'cdgP_{fe}bb'b''a''c''c'$
%\item $ab\overleftarrow{P_{fe}}gdca'a''b''b'c'c''$
%\item $a''a'ab\overleftarrow{P_{fe}}gdcc'c''b''b'$
%\item $a''a'ab\overleftarrow{P_{fe}}gdcc'b'b''c''$
%\item $b'c'cdgP_{fe}baa'a''c''b''$
%\item $b'c'cdgP_{fe}baa'a''b''c''$
%\end{itemize}

Let $P$ be a hamiltonian $ec$-path in $H_{k-1}$. $P$ is of the form $ebaP_{fg}dc$ or $edP_{gf}abc$. Thus there exists a path in $H_{k-1}$ with endpoints $g$ and $f$ containing all vertices in $V(G)\setminus \{a,b,c,d,e\}$. We may assume without loss of generality that this path is $P_{gf}$. Now the following paths are hamiltonian paths in $H_k$:
\vspace{-1mm}
\begin{gather*}
\begin{align*}
a''b''c''c'b'bedP_{gf}aa'c, \quad b'bedP_{gf}aa'a''b''c''c'c, \quad b''a''c''c'b'bedP_{gf}aa'c,
\end{align*}\\[0.5mm]
ca'a\overleftarrow{P_{gf}}debb'b''a''c''c', \quad cc'b'bedP_{gf}aa'a''b''c''.
\end{gather*}

%\begin{itemize}
%\item $a''b''c''c'b'bedP_{gf}aa'c$
%\item $b'bedP_{gf}aa'a''b''c''c'c$
%\item $b''a''c''c'b'bedP_{gf}aa'c$
%\item $ca'a\overleftarrow{P_{gf}}debb'b''a''c''c'$
%\item $cc'b'bedP_{gf}aa'a''b''c''$ 
%\end{itemize}

We have shown that there exists for each pair of distinct $u, v \in V(H_k)$ a hamiltonian $uv$-path in $H_k$. This completes the proof.
\end{proof}

We now show that $H_k$ contains two vertices such that the lengths of the paths between them are very restricted:
\begin{lemma}
\label{lem:LengthH}
    For each integer $k \geq 0$, $H_k$ contains two adjcent vertices such that there is no path between them whose length is in $\{3,5,7,\ldots,9+4k\}$.
\end{lemma}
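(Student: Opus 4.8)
The plan is to prove Lemma~\ref{lem:LengthH} by induction on $k$, tracking the pair $a$ and $c$ throughout the construction. The base case $k=0$ is exactly the second part of Lemma~\ref{lem:baseCaseH}, which gives that there is no $ac$-path of length $3,5,7$ or $9$ in $H_0$. For the inductive step, I would show that the expansion in Fig.~\ref{fig:expansionMoreComplicated} simultaneously (i)~preserves the non-existence of short $ac$-paths and (ii)~enlarges the forbidden interval by four, from $\{3,\ldots,9+4(k-1)\}$ to $\{3,\ldots,9+4k\}$. Concretely, I would prove the cleaner statement that in $H_k$ there is no $a'c$-path (equivalently, by symmetry of the gadget, no $ac$-path routed through the inserted seven-vertex block) of any odd length in the stated range; the vertices $a$ and $c$ are adjacent in every $H_k$, which accounts for the wording "two adjacent vertices."

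The key technical step is a \textbf{path-surgery/bookkeeping argument} analogous to the commented-out proof of Lemma~\ref{lem:baseCaseH}. In $H_{k-1}$ the vertices $a,b,c,d,e,f,g$ induce the subgraph on the left of Fig.~\ref{fig:expansionMoreComplicated}; in $H_k$ this is replaced by the eight-edge gadget on the right, in which $a$ and $c$ are now separated by the six inserted vertices $a',a'',b',b'',c',c''$. First I would enumerate, exactly as in the base case, the possible "shapes" an $ac$-path can take according to which boundary edges it uses (the edges leaving the gadget at $f$, $g$, $e$, and the internal connections to $d$ and $b$), reducing to a bounded list of forms such as $a\,P_{fg}\,d\,c$, $a\,b\,P_{ef}\,g\,d\,c$, and so on. For each form, the length of the $ac$-path equals a fixed small constant plus the number of edges of a forced $H_{k-1}$-subpath $P_{xy}$ between two boundary vertices $x,y$, and crucially that subpath must avoid the vertices $\{a,b,c,d\}$ fixed by the shape. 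The inductive hypothesis controls precisely which lengths such avoiding subpaths can realize, so the arithmetic forces every candidate $ac$-path length to a fixed parity and to lie outside the target interval.

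The main obstacle will be \textbf{the parity and offset bookkeeping across the six new vertices}: I must verify that inserting the gadget shifts every realizable $ac$-path length by exactly the right amount (adding $6$ to the vertex count but $+4$ to the top of the odd-length gap, reflecting that the block can be traversed in a way contributing a fixed odd increment), so that the new forbidden set is exactly $\{3,5,\ldots,9+4k\}$ and no length in this set slips through via a more exotic routing through the gadget. I would handle this by checking that any $ac$-path entering the gadget must traverse all six inserted vertices along one of a small number of Hamiltonian sub-traversals of the block (the gadget is rigid enough that partial traversals are impossible while respecting degrees), each of fixed length, and then combine this with the inductive control on the external $P_{xy}$. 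The secondary subtlety is confirming the claim "$a$ and $c$ are adjacent," which should follow directly from the gadget (the edge $ac$, or rather the edge realizing $\mathfrak{P}$-adjacency, survives), making the length-$1$ path the unique short connection and legitimizing the exclusion of all odd lengths up to $9+4k$.
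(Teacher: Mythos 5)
There is a genuine gap: you are tracking the wrong pair of vertices, and the statement you propose to prove in the inductive step is false. When $H_k$ is built from $H_{k-1}$ (Fig.~\ref{fig:expansionMoreComplicated}), the edges $ac$ and $bc$ of $H_{k-1}$ are deleted (the paper views $H_{k-1}-ac-bc$ as a subgraph of $H_k$), so $a$ and $c$ are \emph{not} adjacent in $H_k$; your claim that ``the vertices $a$ and $c$ are adjacent in every $H_k$'' fails. Worse, the pair $(a,c)$ loses the path-length property at the very first expansion: $a\,a'\,a''\,c''\,c'\,c$ is an $ac$-path of length $5$ in $H_k$. The alternative pair you suggest, $(a',c)$, fails for the same reason: $a'\,a''\,b''\,b'\,c'\,c$ is an $a'c$-path of length $5$. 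Your supporting claim that any path entering the gadget must traverse all six inserted vertices along a Hamiltonian sub-traversal (``partial traversals are impossible while respecting degrees'') is also false, as the length-$2$ path $a\,a'\,c$ already shows; this claim was meant to carry the parity bookkeeping, so the inductive step collapses.

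The paper keeps your overall skeleton (induction, plus a case analysis converting a hypothetical short path into a short path in $H_{k-1}$ that contradicts the inductive hypothesis), but the distinguished pair must \emph{move inward} at each step: in $H_k$ it is the pair $(a'',c'')$ of newly inserted vertices, which is adjacent, and which under the isomorphism $H_{k-1}[\{a,b,c,d,e,f,g\}]\cong H_k[\{a',a'',b',b'',c,c',c''\}]$ plays exactly the role that $(a,c)$ played in $H_{k-1}$; this relabelling is what makes the induction close. Concretely, one checks directly that no $a''c''$-path has length $3$, $5$, or $7$, and any $a''c''$-path $P$ of odd length between $9$ and $9+4k$ must pass through $\{a,b,c\}$; in each of the five ways it can do so, stripping the primed vertices from $P$ and reattaching $a$, $b$, or $c$ as needed yields an $ac$-path in $H_{k-1}$ of length $|E(P)|-4$ or $|E(P)|-6$, which lies in $\{3,5,\ldots,9+4(k-1)\}$ and contradicts the inductive hypothesis. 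If you re-aim your argument at $(a'',c'')$ and replace the forced-full-traversal claim by this case analysis, your plan becomes the paper's proof.
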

\begin{proof}
We prove this by induction on $k$. The base case $k=0$ is given by Lemma~\ref{lem:baseCaseH}. Now assume the statement holds for all $H_i$ with $i<k$. Let $H_k$ be constructed from $H_{k-1}$ as shown in Fig.~\ref{fig:expansionMoreComplicated}. As before, throughout the proof we see $H_{k-1} - ac - bc$ as subgraph of $H_k$. We will now prove that there is no $a''c''$-path in $H_k$ whose length is in $\{3,5,7,\ldots,9+4k\}$. By enumerating all short $a''c''$-paths, one can conclude that there is clearly no such path of length $3$, $5$ or $7$. For the sake of obtaining a contradiction, let $P$ be an $a''c''$-path whose length is in $\{9,11,\ldots,9+4k\}$. Now $P$ must contain at least one of the vertices from $\{a,b,c\}$. For every way in which (some of) these three vertices could appear in $P$, we shall obtain a contradiction:

\smallskip

\noindent \textsc{Case 1}: $P \in \{ a''a'P_{ab}b'b''c'', a''a'P_{ab}b'c'c'' \}$. Now $|E(P_{ab})|=|E(P)|-5$ and $P_{ab}$ is a path containing only vertices in $V(H_{k-1})\setminus \{c\}$. Therefore, $P_{ab}c$ is a path of length $|E(P)|-4$ in $H_{k-1}$, a contradiction.

\smallskip

\noindent \textsc{Case 2}: $P \in \{ a''a'P_{ac}c'c'', a''a'P_{ac}c'b'b''c'' \}$. Now $|E(P_{ac})|=|E(P)|-4$ or $|E(P_{ac})|=|E(P)|-6$ and $P_{ac}$ is a path containing only vertices in $V(H_{k-1})$, a contradiction.

\smallskip

\noindent \textsc{Case 3}: $P = a''b''b'P_{ba}a'cc'c''$. Now $|E(P_{ba})|=|E(P)|-7$ and $P_{ba}$ is a path containing only vertices in $V(H_{k-1})\setminus \{c\}$. Therefore $P'=cP_{ba}$ is a path of length $|E(P)|-6$ in $H_{k-1}$, a contradiction.

\smallskip

\noindent \textsc{Case 4}: $P = a''b''b'P_{bc}c'c''$. This case has three subcases:\\
In the first subcase, $P = a''b''b'bP_{ac}c'c''$. Now $|E(P_{ac})|=|E(P)|-6$ and $P_{ac}$ is a path in $H_{k-1}$, a contradiction. In the second subcase, $P = a''b''b'bP_{ea}a'cc'c''$. Now $|E(P_{ea})|=|E(P)|-8$ and $P_{ea}$ is a path containing only vertices in $V(H_{k-1})\setminus \{b,c\}$. Therefore, $P'=cbP_{ea}$ is a path of length $|E(P)|-6$ in $H_{k-1}$, a contradiction. In the third subcase, $P = a''b''b'bP_{ed}cc'c''$. Now $|E(P_{ed})|=|E(P)|-7$ and $P_{ed}$ is a path containing only vertices in $V(H_{k-1})\setminus \{a,b,c\}$. Therefore, $abP_{ed}c$ is a path of length $|E(P)|-4$ in $H_{k-1}$, a contradiction.

\smallskip

\noindent \textsc{Case 5}: $P \in \{ a''a'P_{cb}b'b''c'', a''a'P_{cb}b'c'c'' \}$. This case has two subcases:\\
In the first subcase, $P = a''a'P_{ce}bb'b''c''$ or $a''a'P_{ce}bb'c'c''$. Now $|E(P_{ce})|=|E(P)|-6$ and $P_{ce}$ is a path containing only vertices in $V(H_{k-1})\setminus \{a,b\}$. Therefore, $P'=P_{ce}ba$ is a path of length $|E(P)|-4$ in $H_{k-1}$, a contradiction. In the second subcase, $P \in \{ a''a'P_{ca}bb'b''c'', a''a'P_{ca}bb'c'c'' \}$. Now $|E(P_{ca})|=|E(P)|-6$ and $P_{ca}$ is a path in $H_{k-1}$, a contradiction. \end{proof}

By combining Lemmas~\ref{lem:HamConnH} and~\ref{lem:LengthH}, we obtain that the Faudree-Schelp Conjecture is false in a strong sense:
\begin{theorem}
\label{thm:HFamily}
For every non-negative integer $k$ there exists a hamiltonian-connected graph on $16+6k$ vertices that does not satisfy $\mathfrak{P}_{i}$ for each $i \in \{4,6,8,\ldots,10+4k\}$, i.e.\ a linear number of times (in the order of the graph) within the interval conjectured by Faudree and Schelp.
\end{theorem}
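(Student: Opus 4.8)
The plan is to let $G = H_k$ and simply assemble the two preceding lemmas, translating their conclusions into the $\mathfrak{P}_i$ notation. By construction $H_k$ has $n = 16+6k$ vertices, and by Lemma~\ref{lem:HamConnH} it is hamiltonian-connected, i.e.\ it satisfies $\mathfrak{P}_n$. Thus $G$ meets the hypothesis of the Faudree-Schelp Conjecture, and everything to come concerns the conclusion.

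The heart of the argument is a routine but delicate index translation. Recall that $G$ satisfies $\mathfrak{P}_i$ precisely when every pair of distinct vertices is joined by a path on $i$ vertices, equivalently of length $i-1$; hence the nonexistence of a $uv$-path of length $\ell$ exhibits the failure of $\mathfrak{P}_{\ell+1}$. Lemma~\ref{lem:LengthH} supplies two vertices $u,v$ of $H_k$ with no $uv$-path of any odd length $\ell \in \{3,5,\ldots,9+4k\}$, so $H_k$ fails $\mathfrak{P}_{\ell+1}$ for each such $\ell$; that is, it fails $\mathfrak{P}_i$ for every even $i \in \{4,6,\ldots,10+4k\}$, which gives the first assertion.

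It remains to count how many of these failing indices lie in the conjectured window $\tfrac{n}{2}+1 \le i \le n$. With $n = 16+6k$ one has $\tfrac{n}{2}+1 = 9+3k$, while the largest failing index $10+4k$ satisfies $10+4k < 16+6k = n$, so the upper endpoint is harmless and the relevant indices are the even integers with $9+3k \le i \le 10+4k$. This window has length $1+k$ and therefore contains on the order of $k/2$ even integers---a quantity linear in $k$, hence in $n$. Since this theorem is in essence a corollary of Lemmas~\ref{lem:HamConnH} and~\ref{lem:LengthH}, there is no serious obstacle; the one point to watch is this arithmetic, namely keeping the off-by-one between ``length $\ell$'' and ``property $\mathfrak{P}_{\ell+1}$'' correct and verifying that $9+3k$ stays strictly below $10+4k$, so that the overlap with the conjectured interval is not merely nonempty but grows linearly.
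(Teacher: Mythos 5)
Your proposal is correct and matches the paper's approach exactly: the paper also obtains Theorem~\ref{thm:HFamily} by directly combining Lemmas~\ref{lem:HamConnH} and~\ref{lem:LengthH}, with the shift from ``no path of length $\ell$'' to ``fails $\mathfrak{P}_{\ell+1}$'' left implicit. Your explicit index translation and the arithmetic verifying the linear overlap with the interval $[\tfrac{n}{2}+1, n]$ are both accurate, just spelled out in more detail than the paper bothers to.
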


\section{Hamiltonian-connected graphs missing many cycle lengths}
\label{sec:porous_cycle_spectrum}

In the previous section, we showed the existence of hamiltonian-connected graphs having two adjacent vertices with no path of length $\ell$ between them, for many values of $\ell$. In other words, such graphs have \textit{some} edge that is not contained in any cycle of length $\ell+1$. In the current section we discuss hamiltonian-connected graphs such that \textit{no} edge is contained in a cycle of length $\ell+1$, i.e.\ hamiltonian-connected graphs without $(\ell+1)$-cycles. This is motivated by Thomassen's remark from~\cite{thomassen1978counterexamples} stating that
\textquote{[m]aybe there even exists a hamiltonian-connected graph containing no cycle of length $n-1$ at all.} In the following, based on three auxiliary results, we shall present an infinite family of hamiltonian-connected graphs with small cycle spectra (relative to their orders). It shall sometimes be more convenient to use the following notation. For integers $a, b$ with $a \le b$, let $[a,b] := \{ k \in \mathbb{Z} : a \le k \le b \}$, and for a natural number $n$, let $[n] := \{ 1, \ldots, n \}$. 

Let $G_A$ be the graph shown in Fig.~\ref{fig:gadget}. For each integer $k \geq 2$, let $F_k$ be the graph obtained by taking $k$ copies of $G_A$ (labelling the resulting vertices as $v_{i,j}$, $i \in [k], j \in [6]$, in the natural way) and adding the edges $v_{i,3}v_{i+1,1}$ and $v_{i,6}v_{i+1,4}$ for all $i \in [k]$. In the remainder of this paper, whenever we speak of $v_{i,j}$ in the graph $F_k$, $i$ should be interpreted as $((i-1)~{\rm mod}~k) + 1$ and $j$ should be interpreted as $((j-1)~{\rm mod}~6) + 1$.
\vspace{-5mm}
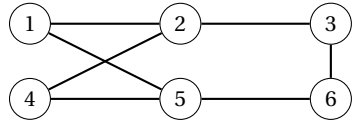
\begin{figure}[h]
        \centering
        \begin{tikzpicture}[main_node/.style={circle,draw,minimum size=1em,inner sep=3pt]}]

\node[main_node] (0) at (-2, 0) {4};
\node[main_node] (1) at (-2, 1) {1};
\node[main_node] (2) at (0, 0) {5};
\node[main_node] (3) at (0, 1) {2};
\node[main_node] (4) at (2,0) {6};
\node[main_node] (5) at (2, 1) {3};

 \path[draw, thick]
(0) edge node {} (2) 
(2) edge node {} (4) 
(4) edge node {} (5) 
(5) edge node {} (3) 
(3) edge node {} (1) 
(0) edge node {} (3) 
(1) edge node {} (2) 
;

\end{tikzpicture}
        \caption{The graph \(G_A\) with all vertices labelled.}
        \label{fig:gadget}
        \vspace{-5mm}
    \end{figure}

\begin{lemma}
\label{lem:FHamConn}
For each integer $k \geq 2$, $F_k$ is hamiltonian-connected.
\end{lemma}

\begin{proof}
We will show the existence of a hamiltonian $uw$-path in $F_k$ for every two distinct vertices $u,w \in V(F_k)$. This will be done by constructing several subpaths in each of the $k$ copies of the graph $G_A$ and then gluing these subpaths together to obtain a hamiltonian $uw$-path.

Suppose $u=v_{i,p}$ and $w=v_{i,q}$ for some $i \in [k]$ and $1 \leq p < q \leq 6$. Up to symmetry there are nine distinct pairs $(p,q)$ to consider. Fig.~\ref{fig:intragadget} shows for each such pair two subpaths, where one subpath has vertex $u$ as an endpoint and the other subpath has vertex $w$ as an endpoint. These two subpaths contain all vertices in $\{v_{i,1},v_{i,2},v_{i,3},v_{i,4},$ $v_{i,5},v_{i,6}\}$. Fig.~\ref{fig:intragadget-rest} contains three additional sets of subpaths that contain all vertices in $\{v_{j,1},v_{j,2},$ $v_{j,3},v_{j,4},v_{j,5},v_{j,6}\}$. A hamiltonian $uw$-path can now be constructed by gluing a set of subpaths shown in Fig.~\ref{fig:intragadget} (in copy $i$ of the graph $G_A$ in $F_k$) together with a set of subpaths shown in Fig.~\ref{fig:intragadget-rest} (in copy $j \neq i$ of the graph $G_A$ in $F_k$). More precisely, the subpaths depicted in the cases shown in Fig.~\ref{fig:case2-1-1},~\ref{fig:case2-1-2},~\ref{fig:case2-1-4},~\ref{fig:case2-1-5},~\ref{fig:case2-1-6},~\ref{fig:case2-1-8} and~\ref{fig:case2-1-9} (in copy $i$ of the graph $G_A$) should be combined with the subpaths shown in Fig.~\ref{fig:case2-2-1} (in copy $j \neq i$ of the graph $G_A$). The subpaths depicted in the case shown in Fig.~\ref{fig:case2-1-3} (in copy $i$ of the graph $G_A$) should be combined with the subpaths shown in Fig.~\ref{fig:case2-2-3} (in copy $i-1$ of the graph $G_A$) and the subpaths shown in Fig.~\ref{fig:case2-2-2} (in copy $j \notin \{i, i-1\}$ of the graph $G_A$). Finally, the subpaths depicted in the case shown in Fig.~\ref{fig:case2-1-7} (in copy $i$ of the graph $G_A$) should be combined with the subpaths shown in Fig.~\ref{fig:case2-2-2} (in copy $j \neq i$ of the graph $G_A$).

\begin{figure}
\centering
     \begin{subfigure}[b]{0.3\linewidth}
    \begin{tikzpicture}[main_node/.style={circle,draw,minimum size=1em,inner sep=2pt},scale=0.8]

\node[main_node] (4) at (-1.5, 0) {};
\node[main_node,fill=bluereadable] (1) at (-1.5, 1) {};
\node[main_node] (5) at (0, 0) {};
\node[main_node,fill=bluereadable] (2) at (0, 1) {};
\node[main_node] (6) at (1.5,0) {};
\node[main_node] (3) at (1.5, 1) {};

 \path[draw, thick]
(4) edge node {} (5) 
(5) edge node {} (6) 
(6) edge node {} (3) 
(3) edge node {} (2) 
(2) edge node {} (1) 
(4) edge node {} (2) 
(1) edge node {} (5) 
;

 \path[draw, line width=0.8mm, color=bluereadable]
(2) edge node {} (4) 
(4) edge node {} (5) 
(5) edge node {} (6)
(6) edge node {} (3) 
;

\draw[gray, thick] (4) -- (-2.5,0);
\draw[bluereadable, line width=0.8mm] (1) -- (-2.5,1);
\draw[gray, thick] (6) -- (2.5,0);
\draw[bluereadable, line width=0.8mm] (3) -- (2.5,1);

\end{tikzpicture}
\vspace{-5.5mm}
    \caption{Case 1}
    \label{fig:case2-1-1}
    \end{subfigure}
    \hfill
    \centering
         \begin{subfigure}[b]{0.3\linewidth}
    \begin{tikzpicture}[main_node/.style={circle,draw,minimum size=1em,inner sep=2pt},scale=0.8]

\node[main_node] (4) at (-1.5, 0) {};
\node[main_node,fill=bluereadable] (1) at (-1.5, 1) {};
\node[main_node] (5) at (0, 0) {};
\node[main_node] (2) at (0, 1) {};
\node[main_node] (6) at (1.5,0) {};
\node[main_node,fill=bluereadable] (3) at (1.5, 1) {};

 \path[draw, thick]
(4) edge node {} (5) 
(5) edge node {} (6) 
(6) edge node {} (3) 
(3) edge node {} (2) 
(2) edge node {} (1) 
(4) edge node {} (2) 
(1) edge node {} (5) 
;
 \path[draw, line width=0.8mm, color=bluereadable]
(1) edge node {} (5) 
(2) edge node {} (4) 
(3) edge node {} (2)
(5) edge node {} (6) 
;

\draw[bluereadable, line width=0.8mm] (4) -- (-2.5,0);
\draw[gray, thick] (1) -- (-2.5,1);
\draw[bluereadable, line width=0.8mm] (6) -- (2.5,0);
\draw[gray, thick] (3) -- (2.5,1);

\end{tikzpicture}
\vspace{-5.5mm}
    \caption{Case 2}
    \label{fig:case2-1-2}
    \end{subfigure}
    \hfill
    \begin{subfigure}[b]{0.3\linewidth}
    \begin{tikzpicture}[main_node/.style={circle,draw,minimum size=1em,inner sep=2pt},scale=0.8]

\node[main_node,fill=bluereadable] (4) at (-1.5, 0) {};
\node[main_node,fill=bluereadable] (1) at (-1.5, 1) {};
\node[main_node] (5) at (0, 0) {};
\node[main_node] (2) at (0, 1) {};
\node[main_node] (6) at (1.5,0) {};
\node[main_node] (3) at (1.5, 1) {};

 \path[draw, thick]
(4) edge node {} (5) 
(5) edge node {} (6) 
(6) edge node {} (3) 
(3) edge node {} (2) 
(2) edge node {} (1) 
(4) edge node {} (2) 
(1) edge node {} (5) 
;

 \path[draw, line width=0.8mm, color=bluereadable]
(1) edge node {} (2) 
(2) edge node {} (3) 
(4) edge node {} (5)
(5) edge node {} (6) 
;

\draw[gray, thick] (4) -- (-2.5,0);
\draw[gray, thick] (1) -- (-2.5,1);
\draw[bluereadable, line width=0.8mm] (6) -- (2.5,0);
\draw[bluereadable, line width=0.8mm] (3) -- (2.5,1);
\end{tikzpicture}
\vspace{-5.5mm}
    \caption{Case 3}
    \label{fig:case2-1-3}
    \end{subfigure}
    \hfill
    \centering
\begin{subfigure}[b]{0.3\linewidth}
\vspace{3mm}
    \begin{tikzpicture}[main_node/.style={circle,draw,minimum size=1em,inner sep=2pt},scale=0.8]

\node[main_node] (4) at (-1.5, 0) {};
\node[main_node,fill=bluereadable] (1) at (-1.5, 1) {};
\node[main_node,fill=bluereadable] (5) at (0, 0) {};
\node[main_node] (2) at (0, 1) {};
\node[main_node] (6) at (1.5,0) {};
\node[main_node] (3) at (1.5, 1) {};

 \path[draw, thick]
(4) edge node {} (5) 
(5) edge node {} (6) 
(6) edge node {} (3) 
(3) edge node {} (2) 
(2) edge node {} (1) 
(4) edge node {} (2) 
(1) edge node {} (5) 
;

 \path[draw, line width=0.8mm, color=bluereadable]
(5) edge node {} (4) 
(2) edge node {} (1) 
(3) edge node {} (6)
(2) edge node {} (3) 
;

\draw[bluereadable, line width=0.8mm] (4) -- (-2.5,0);
\draw[gray, thick] (1) -- (-2.5,1);
\draw[bluereadable, line width=0.8mm] (6) -- (2.5,0);
\draw[gray, thick] (3) -- (2.5,1);

\end{tikzpicture}
\vspace{-5.5mm}
    \caption{Case 4}
    \label{fig:case2-1-4}
    \end{subfigure}
    \hfill
\begin{subfigure}[b]{0.3\linewidth}
    \begin{tikzpicture}[main_node/.style={circle,draw,minimum size=1em,inner sep=2pt},scale=0.8]

\node[main_node] (4) at (-1.5, 0) {};
\node[main_node,fill=bluereadable] (1) at (-1.5, 1) {};
\node[main_node] (5) at (0, 0) {};
\node[main_node] (2) at (0, 1) {};
\node[main_node,fill=bluereadable] (6) at (1.5,0) {};
\node[main_node] (3) at (1.5, 1) {};

 \path[draw, thick]
(4) edge node {} (5) 
(5) edge node {} (6) 
(6) edge node {} (3) 
(3) edge node {} (2) 
(2) edge node {} (1) 
(4) edge node {} (2) 
(1) edge node {} (5) 
;

 \path[draw, line width=0.8mm, color=bluereadable]
(5) edge node {} (6) 
(4) edge node {} (5) 
(2) edge node {} (4)
(2) edge node {} (3) 
;

\draw[gray, thick] (4) -- (-2.5,0);
\draw[bluereadable, line width=0.8mm] (1) -- (-2.5,1);
\draw[gray, thick] (6) -- (2.5,0);
\draw[bluereadable, line width=0.8mm] (3) -- (2.5,1);

\end{tikzpicture}
    \vspace{-5.5mm}
    \caption{Case 5}
    \label{fig:case2-1-5}
    \end{subfigure}
    \hfill
         \begin{subfigure}[b]{0.3\linewidth}
    \begin{tikzpicture}[main_node/.style={circle,draw,minimum size=1em,inner sep=2pt},scale=0.8]

\node[main_node] (4) at (-1.5, 0) {};
\node[main_node] (1) at (-1.5, 1) {};
\node[main_node] (5) at (0, 0) {};
\node[main_node,fill=bluereadable] (2) at (0, 1) {};
\node[main_node] (6) at (1.5,0) {};
\node[main_node,fill=bluereadable] (3) at (1.5, 1) {};

 \path[draw, thick]
(4) edge node {} (5) 
(5) edge node {} (6) 
(6) edge node {} (3) 
(3) edge node {} (2) 
(2) edge node {} (1) 
(4) edge node {} (2) 
(1) edge node {} (5) 
;

 \path[draw, line width=0.8mm, color=bluereadable]
(3) edge node {} (6) 
(2) edge node {} (1) 
(1) edge node {} (5)
(5) edge node {} (4) 
;

\draw[bluereadable, line width=0.8mm] (4) -- (-2.5,0);
\draw[gray, thick] (1) -- (-2.5,1);
\draw[bluereadable, line width=0.8mm] (6) -- (2.5,0);
\draw[gray, thick] (3) -- (2.5,1);

\end{tikzpicture}
\vspace{-5.5mm}
    \caption{Case 6}
    \label{fig:case2-1-6}
    \end{subfigure}
    
    \centering
    \hfill
     \begin{subfigure}[b]{0.3\linewidth}
     \vspace{3mm}
    \begin{tikzpicture}[main_node/.style={circle,draw,minimum size=1em,inner sep=2pt},scale=0.8]

\node[main_node] (4) at (-1.5, 0) {};
\node[main_node] (1) at (-1.5, 1) {};
\node[main_node,fill=bluereadable] (5) at (0, 0) {};
\node[main_node,fill=bluereadable] (2) at (0, 1) {};
\node[main_node] (6) at (1.5,0) {};
\node[main_node] (3) at (1.5, 1) {};

 \path[draw, thick]
(4) edge node {} (5) 
(5) edge node {} (6) 
(6) edge node {} (3) 
(3) edge node {} (2) 
(2) edge node {} (1) 
(4) edge node {} (2) 
(1) edge node {} (5)
;
 \path[draw, line width=0.8mm, color=bluereadable]
(4) edge node {} (5) 
(6) edge node {} (3) 
(2) edge node {} (1) 
;

\draw[bluereadable, line width=0.8mm] (4) -- (-2.5,0);
\draw[bluereadable, line width=0.8mm] (1) -- (-2.5,1);
\draw[bluereadable, line width=0.8mm] (6) -- (2.5,0);
\draw[bluereadable, line width=0.8mm] (3) -- (2.5,1);

\end{tikzpicture}
\vspace{-5.5mm}
    \caption{Case 7}
    \label{fig:case2-1-7}
    \end{subfigure}
     \hfill
     \begin{subfigure}[b]{0.3\linewidth}
    \begin{tikzpicture}[main_node/.style={circle,draw,minimum size=1em,inner sep=2pt},scale=0.8]

\node[main_node] (4) at (-1.5, 0) {};
\node[main_node] (1) at (-1.5, 1) {};
\node[main_node] (5) at (0, 0) {};
\node[main_node,fill=bluereadable] (2) at (0, 1) {};
\node[main_node,fill=bluereadable] (6) at (1.5,0) {};
\node[main_node] (3) at (1.5, 1) {};

 \path[draw, thick]
(4) edge node {} (5) 
(5) edge node {} (6) 
(6) edge node {} (3) 
(3) edge node {} (2) 
(2) edge node {} (1) 
(4) edge node {} (2) 
(1) edge node {} (5) 
;

 \path[draw, line width=0.8mm, color=bluereadable]
(3) edge node {} (6) 
(4) edge node {} (5) 
(1) edge node {} (5)
(2) edge node {} (4) 
;

\draw[gray, thick] (4) -- (-2.5,0);
\draw[bluereadable, line width=0.8mm] (1) -- (-2.5,1);
\draw[gray, thick] (6) -- (2.5,0);
\draw[bluereadable, line width=0.8mm] (3) -- (2.5,1);

\end{tikzpicture}
\vspace{-5.5mm}
    \caption{Case 8}
    \label{fig:case2-1-8}
    \end{subfigure}
     \hfill
    \begin{subfigure}[b]{0.3\linewidth}
    \begin{tikzpicture}[main_node/.style={circle,draw,minimum size=1em,inner sep=2pt},scale=0.8]

\node[main_node] (4) at (-1.5, 0) {};
\node[main_node] (1) at (-1.5, 1) {};
\node[main_node] (5) at (0, 0) {};
\node[main_node] (2) at (0, 1) {};
\node[main_node,fill=bluereadable] (6) at (1.5,0) {};
\node[main_node,fill=bluereadable] (3) at (1.5, 1) {};

 \path[draw, thick]
(4) edge node {} (5) 
(5) edge node {} (6) 
(6) edge node {} (3) 
(3) edge node {} (2) 
(2) edge node {} (1) 
(4) edge node {} (2) 
(1) edge node {} (5) 
;

 \path[draw, line width=0.8mm, color=bluereadable]
(3) edge node {} (2) 
(2) edge node {} (1) 
(1) edge node {} (5)
(5) edge node {} (4) 
;

\draw[bluereadable, line width=0.8mm] (4) -- (-2.5,0);
\draw[gray, thick] (1) -- (-2.5,1);
\draw[bluereadable, line width=0.8mm] (6) -- (2.5,0);
\draw[gray, thick] (3) -- (2.5,1);

\end{tikzpicture}
\vspace{-5.5mm}
    \caption{Case 9}
    \label{fig:case2-1-9}
    \end{subfigure}
    \hfill
    
    \caption{The nine distinct ways to select two vertices in the same copy of $G_A$ in the graph $F_k$ and two subpaths for each such case.}
    \label{fig:intragadget}
    \vspace{-5mm}
\end{figure}
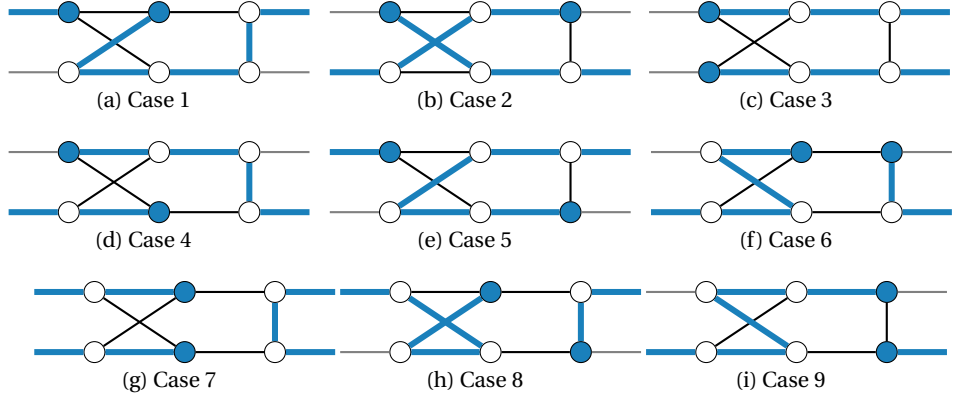

\begin{figure}
    \centering
     \begin{subfigure}[b]{0.3\linewidth}
    \begin{tikzpicture}[main_node/.style={circle,draw,minimum size=1em,inner sep=2pt},scale=0.8]

\node[main_node] (4) at (-1.5, 0) {4};
\node[main_node] (1) at (-1.5, 1) {1};
\node[main_node] (5) at (0, 0) {5};
\node[main_node] (2) at (0, 1) {2};
\node[main_node] (6) at (1.5,0) {6};
\node[main_node] (3) at (1.5, 1) {3};

 \path[draw, thick]
(4) edge node {} (5) 
(5) edge node {} (6) 
(6) edge node {} (3) 
(3) edge node {} (2) 
(2) edge node {} (1) 
(4) edge node {} (2) 
(1) edge node {} (5) 
;

 \path[draw, line width=0.8mm, color=bluereadable]
(1) edge node {} (2) 
(2) edge node {} (4) 
(4) edge node {} (5)
(5) edge node {} (6)
(6) edge node {} (3) 
;

\draw[gray, thick] (4) -- (-2.5,0);
\draw[bluereadable, line width=0.8mm] (1) -- (-2.5,1);
\draw[gray, thick] (6) -- (2.5,0);
\draw[bluereadable, line width=0.8mm] (3) -- (2.5,1);

\end{tikzpicture}
    \caption{Case 1}
    \label{fig:case2-2-1}
    \end{subfigure}
     \hfill
     \begin{subfigure}[b]{0.3\linewidth}
    \begin{tikzpicture}[main_node/.style={circle,draw,minimum size=1em,inner sep=2pt},scale=0.8]

\node[main_node] (4) at (-1.5, 0) {4};
\node[main_node] (1) at (-1.5, 1) {1};
\node[main_node] (5) at (0, 0) {5};
\node[main_node] (2) at (0, 1) {2};
\node[main_node] (6) at (1.5,0) {6};
\node[main_node] (3) at (1.5, 1) {3};

 \path[draw, thick]
(4) edge node {} (5) 
(5) edge node {} (6) 
(6) edge node {} (3) 
(3) edge node {} (2) 
(2) edge node {} (1) 
(4) edge node {} (2) 
(1) edge node {} (5) 
;
 \path[draw, line width=0.8mm, color=bluereadable]
(1) edge node {} (2) 
(2) edge node {} (3) 
(4) edge node {} (5)
(5) edge node {} (6) 
;

\draw[bluereadable, line width=0.8mm] (4) -- (-2.5,0);
\draw[bluereadable, line width=0.8mm] (1) -- (-2.5,1);
\draw[bluereadable, line width=0.8mm] (6) -- (2.5,0);
\draw[bluereadable, line width=0.8mm] (3) -- (2.5,1);

\end{tikzpicture}
    \caption{Case 2}
    \label{fig:case2-2-2}
    \end{subfigure}
    \hfill
    \begin{subfigure}[b]{0.28\linewidth}
    \begin{tikzpicture}[main_node/.style={circle,draw,minimum size=1em,inner sep=2pt},scale=0.8]

\node[main_node] (4) at (-1.5, 0) {4};
\node[main_node] (1) at (-1.5, 1) {1};
\node[main_node] (5) at (0, 0) {5};
\node[main_node] (2) at (0, 1) {2};
\node[main_node] (6) at (1.5,0) {6};
\node[main_node] (3) at (1.5, 1) {3};

 \path[draw, thick]
(4) edge node {} (5) 
(5) edge node {} (6) 
(6) edge node {} (3) 
(3) edge node {} (2) 
(2) edge node {} (1) 
(4) edge node {} (2) 
(1) edge node {} (5) 
;

 \path[draw, line width=0.8mm, color=bluereadable]
(1) edge node {} (2) 
(2) edge node {} (3) 
(4) edge node {} (5)
(5) edge node {} (6)
(3) edge node {} (6) 
;

\draw[bluereadable, line width=0.8mm] (4) -- (-2.5,0);
\draw[bluereadable, line width=0.8mm] (1) -- (-2.5,1);
\draw[gray, thick] (6) -- (2.5,0);
\draw[gray, thick] (3) -- (2.5,1);
\end{tikzpicture}
    \caption{Case 3}
    \label{fig:case2-2-3}
    \end{subfigure}
    \hfill
    \caption{The three sets of subpaths.}
    \label{fig:intragadget-rest}
    \vspace{-5mm}
\end{figure}
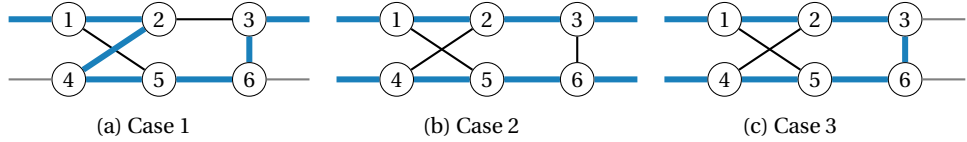

Now suppose that $u=v_{i,p}$ and $w=v_{j,q}$, where $1 \leq i < j \leq k$ and $1 \leq p,q \leq 6$. We will show that $F_k$ contains a hamiltonian $uw$-path by induction on $k$. If $k=2$, there are twelve distinct cases (up to symmetry) and the corresponding hamiltonian paths are shown in Fig.~\ref{fig:intergadget}.

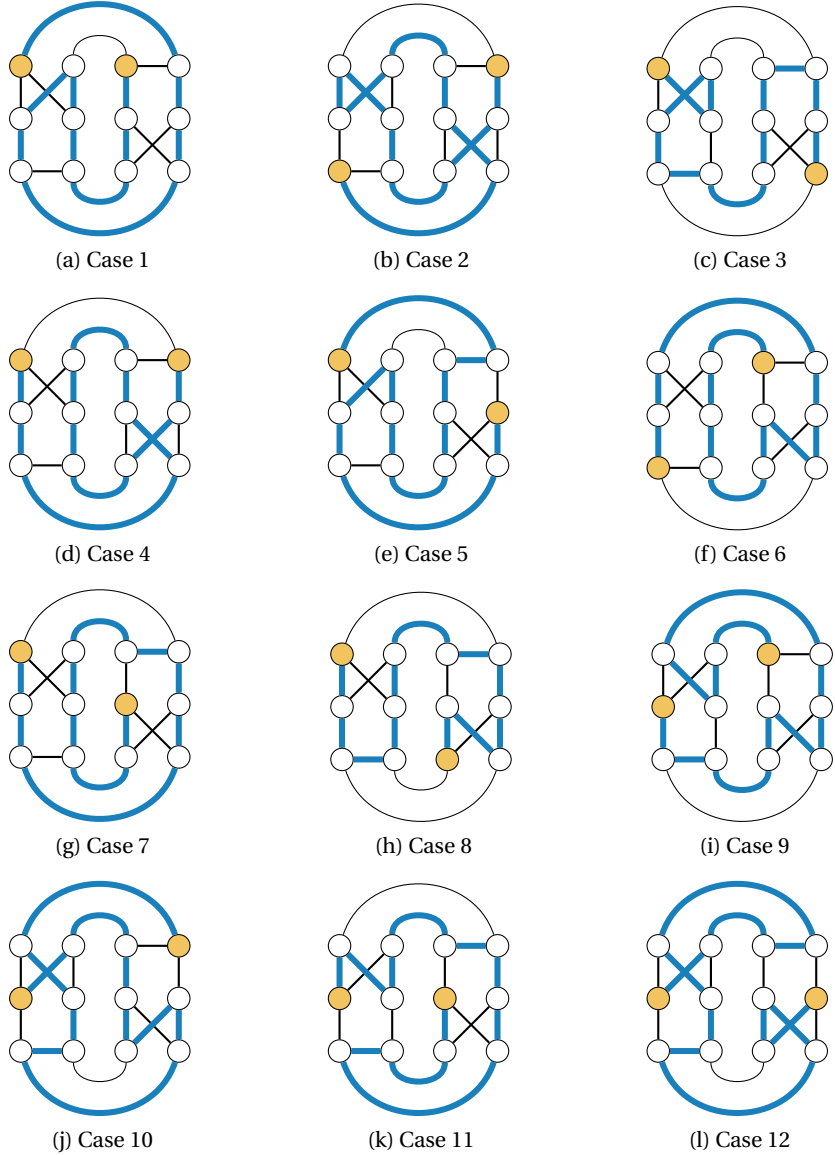
\begin{figure}[h!]
%\vspace{-10mm}
    \centering
     \begin{subfigure}[b]{0.3\textwidth}
     \centering
    \begin{tikzpicture}[main_node/.style={circle,draw,minimum size=1em,inner sep=3pt},scale=0.70]

\node[main_node] (16) at (-1.5, 0) {};
\node[main_node] (13) at (-0.5, 0) {};
\node[main_node] (15) at (-1.5, 1) {};
\node[main_node,fill=redreadable] (14) at (-1.5,2) {};
\node[main_node] (12) at (-0.5, 1) {};
\node[main_node] (11) at (-0.5, 2) {};

 \path[draw, thick]
(14) edge node {} (15) 
(15) edge node {} (16) 
(16) edge node {} (13) 
(13) edge node {} (12) 
(12) edge node {} (11) 
(14) edge node {} (12) 
(11) edge node {} (15)
;
\node[main_node] (21) at (0.5, 0) {};
\node[main_node] (24) at (1.5, 0) {};
\node[main_node] (22) at (0.5, 1) {};
\node[main_node,fill=redreadable] (23) at (0.5,2) {};
\node[main_node] (25) at (1.5, 1) {};
\node[main_node] (26) at (1.5, 2) {};

 \path[draw, thick]
(24) edge node {} (25) 
(25) edge node {} (26) 
(26) edge node {} (23) 
(23) edge node {} (22) 
(22) edge node {} (21) 
(24) edge node {} (22) 
(21) edge node {} (25)
;

 \path[draw, line width=0.8mm, color=bluereadable]
(11) edge node {} (15) 
(11) edge node {} (12) 
(12) edge node {} (13) 
(15) edge node {} (16) 
(21) edge node {} (22) 
(22) edge node {} (23) 
(24) edge node {} (25) 
(25) edge node {} (26) 
;
\draw (14)[line width=0.8mm, color=bluereadable] .. controls (-1,3.5) and (1,3.5) .. (26);
\draw (11) .. controls (-0.5,2.7) and (0.5,2.7) .. (23);
\draw[line width=0.8mm, color=bluereadable] (16) .. controls (-1,-1.5) and (1,-1.5) .. (24);
\draw[line width=0.8mm, color=bluereadable] (13) .. controls (-0.5,-0.7) and (0.5,-0.7) .. (21);

\end{tikzpicture}
\vspace{-3mm}
    \caption{Case 1}
    \label{fig:case2-3-1}
    \end{subfigure}
     \hfill
     \begin{subfigure}[b]{0.3\textwidth}      \centering
     \centering
    \begin{tikzpicture}[main_node/.style={circle,draw,minimum size=1em,inner sep=3pt},scale=0.70]

\node[main_node,fill=redreadable] (16) at (-1.5, 0) {};
\node[main_node] (13) at (-0.5, 0) {};
\node[main_node] (15) at (-1.5, 1) {};
\node[main_node] (14) at (-1.5,2) {};
\node[main_node] (12) at (-0.5, 1) {};
\node[main_node] (11) at (-0.5, 2) {};

 \path[draw, thick]
(14) edge node {} (15) 
(15) edge node {} (16) 
(16) edge node {} (13) 
(13) edge node {} (12) 
(12) edge node {} (11) 
(14) edge node {} (12) 
(11) edge node {} (15)
;
\node[main_node] (21) at (0.5, 0) {};
\node[main_node] (24) at (1.5, 0) {};
\node[main_node] (22) at (0.5, 1) {};
\node[main_node] (23) at (0.5,2) {};
\node[main_node] (25) at (1.5, 1) {};
\node[main_node,fill=redreadable] (26) at (1.5, 2) {};

 \path[draw, thick]
(24) edge node {} (25) 
(25) edge node {} (26) 
(26) edge node {} (23) 
(23) edge node {} (22) 
(22) edge node {} (21) 
(24) edge node {} (22) 
(21) edge node {} (25)
;

 \path[draw, line width=0.8mm, color=bluereadable]
(11) edge node {} (15) 
(15) edge node {} (14) 
(14) edge node {} (12) 
(12) edge node {} (13) 
(21) edge node {} (25) 
(25) edge node {} (26) 
(24) edge node {} (22) 
(22) edge node {} (23) 
;
\draw (14) .. controls (-1,3.5) and (1,3.5) .. (26);
\draw (11)[line width=0.8mm, color=bluereadable] .. controls (-0.5,2.7) and (0.5,2.7) .. (23);
\draw[line width=0.8mm, color=bluereadable] (16) .. controls (-1,-1.5) and (1,-1.5) .. (24);
\draw[line width=0.8mm, color=bluereadable] (13) .. controls (-0.5,-0.7) and (0.5,-0.7) .. (21);

\end{tikzpicture}
\vspace{-3mm}
    \caption{Case 2}
    \label{fig:case2-3-2}
    \end{subfigure}
    \hfill
    \begin{subfigure}[b]{0.3\textwidth}      \centering
    \begin{tikzpicture}[main_node/.style={circle,draw,minimum size=1em,inner sep=3pt},scale=0.70]

\node[main_node] (16) at (-1.5, 0) {};
\node[main_node] (13) at (-0.5, 0) {};
\node[main_node] (15) at (-1.5, 1) {};
\node[main_node,fill=redreadable] (14) at (-1.5,2) {};
\node[main_node] (12) at (-0.5, 1) {};
\node[main_node] (11) at (-0.5, 2) {};

 \path[draw, thick]
(14) edge node {} (15) 
(15) edge node {} (16) 
(16) edge node {} (13) 
(13) edge node {} (12) 
(12) edge node {} (11) 
(14) edge node {} (12) 
(11) edge node {} (15)
;
\node[main_node] (21) at (0.5, 0) {};
\node[main_node,fill=redreadable] (24) at (1.5, 0) {};
\node[main_node] (22) at (0.5, 1) {};
\node[main_node] (23) at (0.5,2) {};
\node[main_node] (25) at (1.5, 1) {};
\node[main_node] (26) at (1.5, 2) {};

 \path[draw, thick]
(24) edge node {} (25) 
(25) edge node {} (26) 
(26) edge node {} (23) 
(23) edge node {} (22) 
(22) edge node {} (21) 
(24) edge node {} (22) 
(21) edge node {} (25)
;

  \path[draw, line width=0.8mm, color=bluereadable]
(11) edge node {} (15) 
(11) edge node {} (12) 
(14) edge node {} (12) 
(15) edge node {} (16)
(13) edge node {} (16) 
(21) edge node {} (22) 
(22) edge node {} (23) 
(23) edge node {} (26) 
(26) edge node {} (25)
(25) edge node {} (24)
;
\draw (14) .. controls (-1,3.5) and (1,3.5) .. (26);
\draw (11) .. controls (-0.5,2.7) and (0.5,2.7) .. (23);
\draw (16) .. controls (-1,-1.5) and (1,-1.5) .. (24);
\draw[line width=0.8mm, color=bluereadable] (13) .. controls (-0.5,-0.7) and (0.5,-0.7) .. (21);

\end{tikzpicture}
\vspace{-3mm}
    \caption{Case 3}
    \label{fig:case2-3-3}
    \end{subfigure}
    \hfill

\centering
     \begin{subfigure}[b]{0.3\textwidth}      \centering
    \begin{tikzpicture}[main_node/.style={circle,draw,minimum size=1em,inner sep=3pt},scale=0.70]

\node[main_node] (16) at (-1.5, 0) {};
\node[main_node] (13) at (-0.5, 0) {};
\node[main_node] (15) at (-1.5, 1) {};
\node[main_node,fill=redreadable] (14) at (-1.5,2) {};
\node[main_node] (12) at (-0.5, 1) {};
\node[main_node] (11) at (-0.5, 2) {};

 \path[draw, thick]
(14) edge node {} (15) 
(15) edge node {} (16) 
(16) edge node {} (13) 
(13) edge node {} (12) 
(12) edge node {} (11) 
(14) edge node {} (12) 
(11) edge node {} (15)
;
\node[main_node] (21) at (0.5, 0) {};
\node[main_node] (24) at (1.5, 0) {};
\node[main_node] (22) at (0.5, 1) {};
\node[main_node] (23) at (0.5,2) {};
\node[main_node] (25) at (1.5, 1) {};
\node[main_node,fill=redreadable] (26) at (1.5, 2) {};

 \path[draw, thick]
(24) edge node {} (25) 
(25) edge node {} (26) 
(26) edge node {} (23) 
(23) edge node {} (22) 
(22) edge node {} (21) 
(24) edge node {} (22) 
(21) edge node {} (25)
;

  \path[draw, line width=0.8mm, color=bluereadable]
(11) edge node {} (12) 
(13) edge node {} (12) 
(14) edge node {} (15) 
(15) edge node {} (16)
(26) edge node {} (25) 
(25) edge node {} (21) 
(23) edge node {} (22) 
(22) edge node {} (24)
;
\draw (14) .. controls (-1,3.5) and (1,3.5) .. (26);
\draw (11)[line width=0.8mm, color=bluereadable] .. controls (-0.5,2.7) and (0.5,2.7) .. (23);
\draw (16)[line width=0.8mm, color=bluereadable] .. controls (-1,-1.5) and (1,-1.5) .. (24);
\draw[line width=0.8mm, color=bluereadable] (13) .. controls (-0.5,-0.7) and (0.5,-0.7) .. (21);

\end{tikzpicture}
\vspace{-3mm}
    \caption{Case 4}
    \label{fig:case2-3-4}
    \end{subfigure}
     \hfill
     \begin{subfigure}[b]{0.3\textwidth}      \centering
    \begin{tikzpicture}[main_node/.style={circle,draw,minimum size=1em,inner sep=3pt},scale=0.70]

\node[main_node] (16) at (-1.5, 0) {};
\node[main_node] (13) at (-0.5, 0) {};
\node[main_node] (15) at (-1.5, 1) {};
\node[main_node,fill=redreadable] (14) at (-1.5,2) {};
\node[main_node] (12) at (-0.5, 1) {};
\node[main_node] (11) at (-0.5, 2) {};

 \path[draw, thick]
(14) edge node {} (15) 
(15) edge node {} (16) 
(16) edge node {} (13) 
(13) edge node {} (12) 
(12) edge node {} (11) 
(14) edge node {} (12) 
(11) edge node {} (15)
;
\node[main_node] (21) at (0.5, 0) {};
\node[main_node] (24) at (1.5, 0) {};
\node[main_node] (22) at (0.5, 1) {};
\node[main_node] (23) at (0.5,2) {};
\node[main_node,fill=redreadable] (25) at (1.5, 1) {};
\node[main_node] (26) at (1.5, 2) {};

 \path[draw, thick]
(24) edge node {} (25) 
(25) edge node {} (26) 
(26) edge node {} (23) 
(23) edge node {} (22) 
(22) edge node {} (21) 
(24) edge node {} (22) 
(21) edge node {} (25)
;

\path[draw, line width=0.8mm, color=bluereadable]
(11) edge node {} (12) 
(11) edge node {} (15) 
(15) edge node {} (16) 
(12) edge node {} (13)
(26) edge node {} (23) 
(25) edge node {} (24) 
(23) edge node {} (22) 
(22) edge node {} (21)
;
\draw (14)[line width=0.8mm, color=bluereadable] .. controls (-1,3.5) and (1,3.5) .. (26);
\draw (11) .. controls (-0.5,2.7) and (0.5,2.7) .. (23);
\draw (16)[line width=0.8mm, color=bluereadable] .. controls (-1,-1.5) and (1,-1.5) .. (24);
\draw[line width=0.8mm, color=bluereadable] (13) .. controls (-0.5,-0.7) and (0.5,-0.7) .. (21);

\end{tikzpicture}
\vspace{-3mm}
    \caption{Case 5}
    \label{fig:case2-3-5}
    \end{subfigure}
    \hfill
    \begin{subfigure}[b]{0.3\textwidth}      \centering
    \begin{tikzpicture}[main_node/.style={circle,draw,minimum size=1em,inner sep=3pt},scale=0.70]

\node[main_node,fill=redreadable] (16) at (-1.5, 0) {};
\node[main_node] (13) at (-0.5, 0) {};
\node[main_node] (15) at (-1.5, 1) {};
\node[main_node] (14) at (-1.5,2) {};
\node[main_node] (12) at (-0.5, 1) {};
\node[main_node] (11) at (-0.5, 2) {};

 \path[draw, thick]
(14) edge node {} (15) 
(15) edge node {} (16) 
(16) edge node {} (13) 
(13) edge node {} (12) 
(12) edge node {} (11) 
(14) edge node {} (12) 
(11) edge node {} (15)
;
\node[main_node] (21) at (0.5, 0) {};
\node[main_node] (24) at (1.5, 0) {};
\node[main_node] (22) at (0.5, 1) {};
\node[main_node,fill=redreadable] (23) at (0.5,2) {};
\node[main_node] (25) at (1.5, 1) {};
\node[main_node] (26) at (1.5, 2) {};

 \path[draw, thick]
(24) edge node {} (25) 
(25) edge node {} (26) 
(26) edge node {} (23) 
(23) edge node {} (22) 
(22) edge node {} (21) 
(24) edge node {} (22) 
(21) edge node {} (25)
;

\path[draw, line width=0.8mm, color=bluereadable]
(11) edge node {} (12) 
(12) edge node {} (13) 
(14) edge node {} (15) 
(15) edge node {} (16)
(21) edge node {} (22) 
(22) edge node {} (24) 
(24) edge node {} (25) 
(25) edge node {} (26)
;
\draw (14)[line width=0.8mm, color=bluereadable] .. controls (-1,3.5) and (1,3.5) .. (26);
\draw (11)[line width=0.8mm, color=bluereadable] .. controls (-0.5,2.7) and (0.5,2.7) .. (23);
\draw (16) .. controls (-1,-1.5) and (1,-1.5) .. (24);
\draw[line width=0.8mm, color=bluereadable] (13) .. controls (-0.5,-0.7) and (0.5,-0.7) .. (21);

\end{tikzpicture}
\vspace{-3mm}
    \caption{Case 6}
    \label{fig:case2-3-6}
    \end{subfigure}
    \hfill

    \centering
     \begin{subfigure}[b]{0.3\textwidth}      \centering
    \begin{tikzpicture}[main_node/.style={circle,draw,minimum size=1em,inner sep=3pt},scale=0.70]

\node[main_node] (16) at (-1.5, 0) {};
\node[main_node] (13) at (-0.5, 0) {};
\node[main_node] (15) at (-1.5, 1) {};
\node[main_node,fill=redreadable] (14) at (-1.5,2) {};
\node[main_node] (12) at (-0.5, 1) {};
\node[main_node] (11) at (-0.5, 2) {};

 \path[draw, thick]
(14) edge node {} (15) 
(15) edge node {} (16) 
(16) edge node {} (13) 
(13) edge node {} (12) 
(12) edge node {} (11) 
(14) edge node {} (12) 
(11) edge node {} (15)
;
\node[main_node] (21) at (0.5, 0) {};
\node[main_node] (24) at (1.5, 0) {};
\node[main_node,fill=redreadable] (22) at (0.5, 1) {};
\node[main_node] (23) at (0.5,2) {};
\node[main_node] (25) at (1.5, 1) {};
\node[main_node] (26) at (1.5, 2) {};

 \path[draw, thick]
(24) edge node {} (25) 
(25) edge node {} (26) 
(26) edge node {} (23) 
(23) edge node {} (22) 
(22) edge node {} (21) 
(24) edge node {} (22) 
(21) edge node {} (25)
;

\path[draw, line width=0.8mm, color=bluereadable]
(11) edge node {} (12) 
(12) edge node {} (13) 
(14) edge node {} (15) 
(15) edge node {} (16)
(21) edge node {} (22) 
(23) edge node {} (26) 
(24) edge node {} (25) 
(25) edge node {} (26)
;
\draw (14) .. controls (-1,3.5) and (1,3.5) .. (26);
\draw (11)[line width=0.8mm, color=bluereadable] .. controls (-0.5,2.7) and (0.5,2.7) .. (23);
\draw (16) [line width=0.8mm, color=bluereadable] .. controls (-1,-1.5) and (1,-1.5) .. (24);
\draw[line width=0.8mm, color=bluereadable] (13) .. controls (-0.5,-0.7) and (0.5,-0.7) .. (21);

\end{tikzpicture}
\vspace{-3mm}
    \caption{Case 7}
    \label{fig:case2-3-7}
    \end{subfigure}
     \hfill
\begin{subfigure}[b]{0.3\textwidth}      \centering
    \begin{tikzpicture}[main_node/.style={circle,draw,minimum size=1em,inner sep=3pt},scale=0.70]

\node[main_node] (16) at (-1.5, 0) {};
\node[main_node] (13) at (-0.5, 0) {};
\node[main_node] (15) at (-1.5, 1) {};
\node[main_node,fill=redreadable] (14) at (-1.5,2) {};
\node[main_node] (12) at (-0.5, 1) {};
\node[main_node] (11) at (-0.5, 2) {};

 \path[draw, thick]
(14) edge node {} (15) 
(15) edge node {} (16) 
(16) edge node {} (13) 
(13) edge node {} (12) 
(12) edge node {} (11) 
(14) edge node {} (12) 
(11) edge node {} (15)
;
\node[main_node,fill=redreadable] (21) at (0.5, 0) {};
\node[main_node] (24) at (1.5, 0) {};
\node[main_node] (22) at (0.5, 1) {};
\node[main_node] (23) at (0.5,2) {};
\node[main_node] (25) at (1.5, 1) {};
\node[main_node] (26) at (1.5, 2) {};

 \path[draw, thick]
(24) edge node {} (25) 
(25) edge node {} (26) 
(26) edge node {} (23) 
(23) edge node {} (22) 
(22) edge node {} (21) 
(24) edge node {} (22) 
(21) edge node {} (25)
;

\path[draw, line width=0.8mm, color=bluereadable]
(11) edge node {} (12) 
(12) edge node {} (13) 
(13) edge node {} (16) 
(15) edge node {} (16)
(14) edge node {} (15)
(21) edge node {} (22) 
(22) edge node {} (24) 
(24) edge node {} (25) 
(25) edge node {} (26)
(23) edge node {} (26)
;
\draw (14) .. controls (-1,3.5) and (1,3.5) .. (26);
\draw (11)[line width=0.8mm, color=bluereadable] .. controls (-0.5,2.7) and (0.5,2.7) .. (23);
\draw (16) .. controls (-1,-1.5) and (1,-1.5) .. (24);
\draw (13) .. controls (-0.5,-0.7) and (0.5,-0.7) .. (21);

\end{tikzpicture}
\vspace{-3mm}
    \caption{Case 8}
    \label{fig:case2-3-8}
    \end{subfigure}
    \hfill
    \begin{subfigure}[b]{0.3\textwidth}      \centering
    \begin{tikzpicture}[main_node/.style={circle,draw,minimum size=1em,inner sep=3pt},scale=0.70]

\node[main_node] (16) at (-1.5, 0) {};
\node[main_node] (13) at (-0.5, 0) {};
\node[main_node,fill=redreadable] (15) at (-1.5, 1) {};
\node[main_node] (14) at (-1.5,2) {};
\node[main_node] (12) at (-0.5, 1) {};
\node[main_node] (11) at (-0.5, 2) {};

 \path[draw, thick]
(14) edge node {} (15) 
(15) edge node {} (16) 
(16) edge node {} (13) 
(13) edge node {} (12) 
(12) edge node {} (11) 
(14) edge node {} (12) 
(11) edge node {} (15)
;
\node[main_node] (21) at (0.5, 0) {};
\node[main_node] (24) at (1.5, 0) {};
\node[main_node] (22) at (0.5, 1) {};
\node[main_node,fill=redreadable] (23) at (0.5,2) {};
\node[main_node] (25) at (1.5, 1) {};
\node[main_node] (26) at (1.5, 2) {};

 \path[draw, thick]
(24) edge node {} (25) 
(25) edge node {} (26) 
(26) edge node {} (23) 
(23) edge node {} (22) 
(22) edge node {} (21) 
(24) edge node {} (22) 
(21) edge node {} (25)
;

\path[draw, line width=0.8mm, color=bluereadable]
(11) edge node {} (12) 
(12) edge node {} (14) 
(15) edge node {} (16) 
(16) edge node {} (13)
(21) edge node {} (22) 
(22) edge node {} (24) 
(24) edge node {} (25) 
(25) edge node {} (26)
;
\draw (14)[line width=0.8mm, color=bluereadable] .. controls (-1,3.5) and (1,3.5) .. (26);
\draw (11)[line width=0.8mm, color=bluereadable] .. controls (-0.5,2.7) and (0.5,2.7) .. (23);
\draw (16) .. controls (-1,-1.5) and (1,-1.5) .. (24);
\draw (13)[line width=0.8mm, color=bluereadable] .. controls (-0.5,-0.7) and (0.5,-0.7) .. (21);

\end{tikzpicture}
\vspace{-3mm}
    \caption{Case 9}
    \label{fig:case2-3-9}
\end{subfigure}
\hfill
\begin{subfigure}[b]{0.3\textwidth}      \centering
    \begin{tikzpicture}[main_node/.style={circle,draw,minimum size=1em,inner sep=3pt},scale=0.70]

\node[main_node] (16) at (-1.5, 0) {};
\node[main_node] (13) at (-0.5, 0) {};
\node[main_node,fill=redreadable] (15) at (-1.5, 1) {};
\node[main_node] (14) at (-1.5,2) {};
\node[main_node] (12) at (-0.5, 1) {};
\node[main_node] (11) at (-0.5, 2) {};

 \path[draw, thick]
(14) edge node {} (15) 
(15) edge node {} (16) 
(16) edge node {} (13) 
(13) edge node {} (12) 
(12) edge node {} (11) 
(14) edge node {} (12) 
(11) edge node {} (15)
;
\node[main_node] (21) at (0.5, 0) {};
\node[main_node] (24) at (1.5, 0) {};
\node[main_node] (22) at (0.5, 1) {};
\node[main_node] (23) at (0.5,2) {};
\node[main_node] (25) at (1.5, 1) {};
\node[main_node,fill=redreadable] (26) at (1.5, 2) {};

 \path[draw, thick]
(24) edge node {} (25) 
(25) edge node {} (26) 
(26) edge node {} (23) 
(23) edge node {} (22) 
(22) edge node {} (21) 
(24) edge node {} (22) 
(21) edge node {} (25)
;

\path[draw, line width=0.8mm, color=bluereadable]
(11) edge node {} (15) 
(12) edge node {} (14) 
(12) edge node {} (13) 
(16) edge node {} (13)
(21) edge node {} (22) 
(22) edge node {} (23) 
(21) edge node {} (25) 
(25) edge node {} (24)
;
\draw (14)[line width=0.8mm, color=bluereadable] .. controls (-1,3.5) and (1,3.5) .. (26);
\draw (11)[line width=0.8mm, color=bluereadable] .. controls (-0.5,2.7) and (0.5,2.7) .. (23);
\draw (16)[line width=0.8mm, color=bluereadable] .. controls (-1,-1.5) and (1,-1.5) .. (24);
\draw (13) .. controls (-0.5,-0.7) and (0.5,-0.7) .. (21);

\end{tikzpicture}
\vspace{-3mm}
    \caption{Case 10}
    \label{fig:case2-3-10}
\end{subfigure}
\hfill
\begin{subfigure}[b]{0.3\textwidth}      \centering
    \begin{tikzpicture}[main_node/.style={circle,draw,minimum size=1em,inner sep=3pt},scale=0.70]

\node[main_node] (16) at (-1.5, 0) {};
\node[main_node] (13) at (-0.5, 0) {};
\node[main_node,fill=redreadable] (15) at (-1.5, 1) {};
\node[main_node] (14) at (-1.5,2) {};
\node[main_node] (12) at (-0.5, 1) {};
\node[main_node] (11) at (-0.5, 2) {};

 \path[draw, thick]
(14) edge node {} (15) 
(15) edge node {} (16) 
(16) edge node {} (13) 
(13) edge node {} (12) 
(12) edge node {} (11) 
(14) edge node {} (12) 
(11) edge node {} (15)
;
\node[main_node] (21) at (0.5, 0) {};
\node[main_node] (24) at (1.5, 0) {};
\node[main_node,fill=redreadable] (22) at (0.5, 1) {};
\node[main_node] (23) at (0.5,2) {};
\node[main_node] (25) at (1.5, 1) {};
\node[main_node] (26) at (1.5, 2) {};

 \path[draw, thick]
(24) edge node {} (25) 
(25) edge node {} (26) 
(26) edge node {} (23) 
(23) edge node {} (22) 
(22) edge node {} (21) 
(24) edge node {} (22) 
(21) edge node {} (25)
;

\path[draw, line width=0.8mm, color=bluereadable]
(11) edge node {} (12) 
(12) edge node {} (14) 
(14) edge node {} (15) 
(16) edge node {} (13)
(21) edge node {} (22) 
(26) edge node {} (23) 
(26) edge node {} (25) 
(25) edge node {} (24)
;
\draw (14) .. controls (-1,3.5) and (1,3.5) .. (26);
\draw (11)[line width=0.8mm, color=bluereadable] .. controls (-0.5,2.7) and (0.5,2.7) .. (23);
\draw (16)[line width=0.8mm, color=bluereadable] .. controls (-1,-1.5) and (1,-1.5) .. (24);
\draw (13)[line width=0.8mm, color=bluereadable] .. controls (-0.5,-0.7) and (0.5,-0.7) .. (21);

\end{tikzpicture}
\vspace{-3mm}
    \caption{Case 11}
    \label{fig:case2-3-11}
\end{subfigure}
\hfill
\begin{subfigure}[b]{0.3\textwidth}      \centering
    \begin{tikzpicture}[main_node/.style={circle,draw,minimum size=1em,inner sep=3pt},scale=0.70]

\node[main_node] (16) at (-1.5, 0) {};
\node[main_node] (13) at (-0.5, 0) {};
\node[main_node,fill=redreadable] (15) at (-1.5, 1) {};
\node[main_node] (14) at (-1.5,2) {};
\node[main_node] (12) at (-0.5, 1) {};
\node[main_node] (11) at (-0.5, 2) {};

 \path[draw, thick]
(14) edge node {} (15) 
(15) edge node {} (16) 
(16) edge node {} (13) 
(13) edge node {} (12) 
(12) edge node {} (11) 
(14) edge node {} (12) 
(11) edge node {} (15)
;
\node[main_node] (21) at (0.5, 0) {};
\node[main_node] (24) at (1.5, 0) {};
\node[main_node] (22) at (0.5, 1) {};
\node[main_node] (23) at (0.5,2) {};
\node[main_node,fill=redreadable] (25) at (1.5, 1) {};
\node[main_node] (26) at (1.5, 2) {};

 \path[draw, thick]
(24) edge node {} (25) 
(25) edge node {} (26) 
(26) edge node {} (23) 
(23) edge node {} (22) 
(22) edge node {} (21) 
(24) edge node {} (22) 
(21) edge node {} (25)
;

\path[draw, line width=0.8mm, color=bluereadable]
(11) edge node {} (15) 
(12) edge node {} (14) 
(12) edge node {} (13) 
(16) edge node {} (13)
(21) edge node {} (22) 
(26) edge node {} (23) 
(22) edge node {} (24) 
(25) edge node {} (21)
;
\draw (14)[line width=0.8mm, color=bluereadable] .. controls (-1,3.5) and (1,3.5) .. (26);
\draw (11)[line width=0.8mm, color=bluereadable] .. controls (-0.5,2.7) and (0.5,2.7) .. (23);
\draw (16)[line width=0.8mm, color=bluereadable] .. controls (-1,-1.5) and (1,-1.5) .. (24);
\draw (13) .. controls (-0.5,-0.7) and (0.5,-0.7) .. (21);

\end{tikzpicture}
\vspace{-3mm}
    \caption{Case 12}
    \label{fig:case2-3-12}
\end{subfigure}
\hfill
    \caption{The twelve distinct ways to select two vertices in different copies of $G_A$.}
    \label{fig:intergadget}
\end{figure}

Now suppose that $F_h$ contains a hamiltonian $uw$-path for all distinct $u, w \in V(F_h)$ belonging to different copies of the graph $G_A$, for each $h \in [2, k-1]$. The strategy to obtain a hamiltonian $uw$-path in $F_k$ is to take a hamiltonian path in $F_{k-1}$ and modify it.

More precisely, note that $F_k$ can be obtained by taking $F_{k-1}$, removing the edges $v_{k-1,3}v_{1,1}$ and $v_{k-1,6}v_{1,4}$, adding a new copy of the graph $G_A$ (whose vertices are labelled as $v_{k,1}, v_{k,2}, \ldots, v_{k,6}$) and adding the edges $v_{k-1,3}v_{k,1}$, $v_{k-1,6}v_{k,4}$, $v_{k,3}v_{1,1}$ and $v_{k,6}v_{1,4}$. In the remainder of this proof, we see $F_{k-1} - v_{k-1,3}v_{1,1} - v_{k-1,6}v_{1,4}$ as a subgraph of $F_k$. We first show the existence of a hamiltonian $uw$-path in $F_k$ for all distinct $u, w \in V(F_{k-1})$ which are in different copies of the graph $G_A$. Let $P$ be a hamiltonian $uw$-path in $F_{k-1}$. There are now three cases. 

\smallskip

\noindent \textsc{Case 1}: $E(P)$ contains $v_{k-1,3}v_{1,1}$, but does not contain $v_{k-1,6}v_{1,4}$ (or vice-versa). Then $P$ can be modified to obtain a hamiltonian $uw$-path in $F_k$ by replacing the edge $v_{k-1,3}v_{1,1}$ (or $v_{k-1,6}v_{1,4}$, respectively) by the subpath shown in Fig.~\ref{fig:case2-2-1} in copy $k$ of the graph $G_A$. 

\smallskip

\noindent \textsc{Case 2}: $E(P)$ contains both $v_{k-1,3}v_{1,1}$ and $v_{k-1,6}v_{1,4}$. Then $P$ can be modified to obtain a hamiltonian $uw$-path in $F_k$ by replacing the edges $v_{k-1,3}v_{1,1}$ and $v_{k-1,6}v_{1,4}$ by the subpaths shown in Fig.~\ref{fig:case2-2-2} in copy $k$ of the graph $G_A$. 

\smallskip

\noindent \textsc{Case 3}: $E(P)$ contains neither $v_{k-1,3}v_{1,1}$ nor $v_{k-1,6}v_{1,4}$. Since the graph is cubic and $u$ and $w$ are in different copies of the graph $G_A$, the edge $v_{k-1,3}v_{k-1,6}$ must be in $E(P)$. Then $P$ can be modified to obtain a hamiltonian $uw$-path in $F_k$ by removing the edge $v_{k-1,3}v_{k-1,6}$ (thereby creating two subpaths) and then using the subpath shown in Fig.~\ref{fig:case2-2-3} in copy $k$ of the graph $G_A$.

\smallskip

Finally, whenever $u$ or $w$ is in $V(F_k) \setminus V(F_{k-1})$ there exists an isomorphism $\phi : V(F_k) \rightarrow V(F_k)$ for which $\phi(u) \in V(F_{k-1})$ and $\phi(w) \in V(F_{k-1})$. Therefore, in this case there also exists a hamiltonian $uw$-path in $F_k$.
\end{proof}

We now show that many cycle lengths are missing in $F_k$.

\begin{lemma}
\label{lem:FCycNotPresent}
For each integer $k \geq 2$, $F_k$ does not contain any cycle of length $3 \ell$ for all $\ell \in [k-1]$ or $3 \ell+1$ for all $\ell \in [2, k-1]$.
\end{lemma}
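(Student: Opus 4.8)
The plan is to show that the length of \emph{every} cycle in $F_k$ lies in a set that avoids all the forbidden values, the obstruction being essentially a congruence modulo $3$. I would first record the only way cycles can pass between gadgets: for each $i$ the edges joining copy $i$ to copy $i+1$ form a cut of size $2$ (namely $v_{i,3}v_{i+1,1}$ and $v_{i,6}v_{i+1,4}$), so any cycle $C$ meets this cut in $0$ or $2$ edges. Writing $A$ for the set of indices whose cut is crossed (both edges used), the copies $C$ visits together with the crossed cuts decompose $C$ into pieces living on contiguous blocks of copies. The key structural claim I would prove is: if $C$ is not contained in a single copy, then either it occupies a proper contiguous arc of copies $a, a+1, \dots, b$, in which the two extreme copies are each traversed by a single ``turnaround'' subpath while every internal copy is traversed by two vertex-disjoint subpaths joining its left ports $\{v_{i,1},v_{i,4}\}$ to its right ports $\{v_{i,3},v_{i,6}\}$; or $A$ consists of all $k$ cuts (the wrap-around case). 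The degenerate ``double turnaround'' inside an interior copy (joining the two left ports to each other and the two right ports to each other) is precisely what would disconnect $C$ into two smaller cycles, so after splitting, every cycle indeed has the arc-with-turnaround-ends shape or is of wrap-around type.

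The second ingredient is a finite inspection of the single gadget $G_A$ (Fig.~\ref{fig:gadget}). I would check there that every $v_{\cdot,3}v_{\cdot,6}$-path has length $1$ or $4$, every $v_{\cdot,1}v_{\cdot,4}$-path has length $2$ or $5$, that any pair of vertex-disjoint subpaths joining the left ports to the right ports uses all six vertices and hence contributes exactly $4$ edges, and that $G_A$ itself contains cycles only of lengths $4$ and $5$ (it has no triangle and no Hamiltonian cycle, since its three forced edges at the degree-two vertices meet at vertex $2$). The crucial consequence is a congruence bookkeeping: an extreme copy on the ``$3,6$'' side contributes $c_a \in \{1,4\}$ edges, hence $\equiv 1 \pmod 3$; an extreme copy on the ``$1,4$'' side contributes $c_b \in \{2,5\}$ edges, hence $\equiv 2 \pmod 3$; each internal copy contributes $4 \equiv 1 \pmod 3$; and each crossed cut contributes $2$ edges. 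A proper arc on $m$ copies has exactly one extreme copy of each type, $m-2$ internal copies and $m-1$ crossed cuts, so its length equals $c_a + c_b + 4(m-2) + 2(m-1) \equiv 1 + 2 + (m-2) + 2(m-1) \equiv 2 \pmod 3$; explicitly it is one of $6m-7,\ 6m-4,\ 6m-1$.

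It then remains to compare these length classes with the forbidden set $\{3\ell : \ell \in [k-1]\} \cup \{3\ell+1 : \ell \in [2,k-1]\}$, whose members are exactly the integers $\equiv 0 \pmod 3$ in $[3,3k-3]$ and the integers $\equiv 1 \pmod 3$ in $[7,3k-2]$. A cycle inside a single copy has length $4$ or $5$, and $4 < 7$ while $5 \equiv 2 \pmod 3$, so neither is forbidden. A proper-arc cycle has length $\equiv 2 \pmod 3$ and hence lies in neither residue class of the forbidden set. Finally, a wrap-around cycle crosses all $k$ cuts, so it uses at least $3$ edges inside each copy (two vertex-disjoint paths on the four ports, with $1,4$ non-adjacent) together with $2k$ cut edges, giving length at least $5k > 3k-2$, which exceeds every forbidden value. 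Thus no cycle of a forbidden length can exist.

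I expect the main obstacle to be the structural claim of the first paragraph: one must argue carefully that the crossed cuts force each cycle onto a contiguous block, that connectivity of a single cycle rules out a double turnaround in an interior copy (and instead splits the cycle into two pieces on sub-arcs), and that in every resulting piece one extreme copy is of the ``$3,6$'' type and the other of the ``$1,4$'' type, so that the residues $1$ and $2$ always pair up. Once this decomposition is in hand, the enumeration inside $G_A$ and the modular comparison are short and mechanical.
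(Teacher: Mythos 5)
Your proof is correct and takes essentially the same approach as the paper's: decompose the cycle according to its intersection with the ring of copies of $G_A$, inspect the gadget to see that the two turnaround types contribute $1$ or $4$ (resp.\ $2$ or $5$) edges while a crossing pair contributes exactly $4$ edges, and conclude that any cycle not confined to a single copy (and not of wrap-around type) has length $\equiv 2 \pmod 3$, hence avoids both forbidden residue classes. The only difference is cosmetic: the paper dispenses with the wrap-around case up front by observing that all forbidden lengths are less than $3k$ and that any cycle with fewer than $3k$ edges must miss some copy entirely, whereas you bound wrap-around cycles below by $5k$ directly; both arguments are immediate.
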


\begin{proof}
The statement can be verified separately for $k=2$ by enumerating all cycles, so henceforth we assume that $k \geq 3$. Consider any cycle $C$ in $F_k$ containing strictly less than $3k$ edges. There must exist a copy of $G_A$ in $F_k$ such that $C$ does not contain any vertices in that copy. Without loss of generality, we may assume that there exist integers $i$ and $j$ such that (i) $1 \leq i \leq j \leq k$; (ii) we do not simultaneously have $i=1$ and $j=k$; and (iii) $C$ contains some vertices of copy $i$ of $G_A$, $C$ contains some vertices of copy $j$ of $G_A$ and $C$ contains all vertices of copy $m$ of $G_A$ for each $i<m<j$ (since there are only two distinct ways in which $C$ can contain some vertices in copy $m$ of $G_A$ as shown in Fig.~\ref{fig:cross-options}).

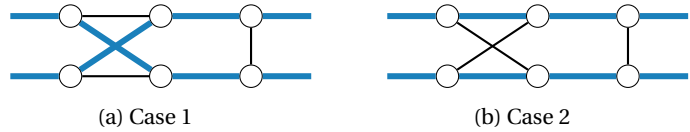
\begin{figure}
\vspace{-1mm}
    \centering
     \begin{subfigure}[b]{0.3\textwidth}
    \begin{tikzpicture}[main_node/.style={circle,draw,minimum size=1em,inner sep=3pt},scale=0.8]

\node[main_node] (4) at (-1.5, 0) {};
\node[main_node] (1) at (-1.5, 1) {};
\node[main_node] (5) at (0, 0) {};
\node[main_node] (2) at (0, 1) {};
\node[main_node] (6) at (1.5,0) {};
\node[main_node] (3) at (1.5, 1) {};

 \path[draw, thick]
(4) edge node {} (5) 
(5) edge node {} (6) 
(6) edge node {} (3) 
(3) edge node {} (2) 
(2) edge node {} (1) 
(4) edge node {} (2) 
(1) edge node {} (5) 
;
 \path[draw, line width=0.8mm, color=bluereadable]
(1) edge node {} (5)
(2) edge node {} (4)
(2) edge node {} (3)
(5) edge node {} (6)
;

\draw[bluereadable, line width=0.8mm] (4) -- (-2.5,0);
\draw[bluereadable, line width=0.8mm] (1) -- (-2.5,1);
\draw[bluereadable, line width=0.8mm] (6) -- (2.5,0);
\draw[bluereadable, line width=0.8mm] (3) -- (2.5,1);

\end{tikzpicture}
    \caption{Case 1}
    \label{fig:case2-6-1}
    \end{subfigure}
    \hspace{0.1\textwidth}
    \begin{subfigure}[b]{0.3\textwidth}
    \begin{tikzpicture}[main_node/.style={circle,draw,minimum size=1em,inner sep=3pt},scale=0.8]

\node[main_node] (4) at (-1.5, 0) {};
\node[main_node] (1) at (-1.5, 1) {};
\node[main_node] (5) at (0, 0) {};
\node[main_node] (2) at (0, 1) {};
\node[main_node] (6) at (1.5,0) {};
\node[main_node] (3) at (1.5, 1) {};

 \path[draw, thick]
(4) edge node {} (5) 
(5) edge node {} (6) 
(6) edge node {} (3) 
(3) edge node {} (2) 
(2) edge node {} (1) 
(4) edge node {} (2) 
(1) edge node {} (5) 
;

 \path[draw, line width=0.8mm, color=bluereadable]
(1) edge node {} (2) 
(2) edge node {} (3) 
(4) edge node {} (5)
(5) edge node {} (6)
;

\draw[bluereadable, line width=0.8mm] (4) -- (-2.5,0);
\draw[bluereadable, line width=0.8mm] (1) -- (-2.5,1);
\draw[bluereadable, line width=0.8mm] (6) -- (2.5,0);
\draw[bluereadable, line width=0.8mm] (3) -- (2.5,1);
\end{tikzpicture}
    \caption{Case 2}
    \label{fig:case2-6-2}
    \end{subfigure}
    \caption{The two distinct cases.}
    \label{fig:cross-options}
    \vspace{-5mm}
\end{figure}

If $i=j$, then $C$ has length 4 or $5$, so in what follows we assume that $i<j$. There are only two distinct ways in which $C$ can contain some vertices in copy $i$ of $G_A$ (in which case it contains either two or five vertices in copy $i$) as shown in Fig.~\ref{fig:start-options}.

\begin{figure}
    \centering
     \begin{subfigure}[b]{0.3\textwidth}
    \begin{tikzpicture}[main_node/.style={circle,draw,minimum size=1em,inner sep=3pt},scale=0.8]

\node[main_node] (4) at (-1.5, 0) {};
\node[main_node] (1) at (-1.5, 1) {};
\node[main_node] (5) at (0, 0) {};
\node[main_node] (2) at (0, 1) {};
\node[main_node] (6) at (1.5,0) {};
\node[main_node] (3) at (1.5, 1) {};

 \path[draw, thick]
(4) edge node {} (5) 
(5) edge node {} (6) 
(6) edge node {} (3) 
(3) edge node {} (2) 
(2) edge node {} (1) 
(4) edge node {} (2) 
(1) edge node {} (5) 
;
 \path[draw, line width=0.8mm, color=bluereadable]
(3) edge node {} (6)
;

\draw[gray, thick] (4) -- (-2.5,0);
\draw[gray, thick] (1) -- (-2.5,1);
\draw[bluereadable, line width=0.8mm] (6) -- (2.5,0);
\draw[bluereadable, line width=0.8mm] (3) -- (2.5,1);

\end{tikzpicture}
    \caption{Case 1}
    \label{fig:case2-5-1}
    \end{subfigure}
    \hspace{0.1\textwidth}
    \begin{subfigure}[b]{0.3\textwidth}
    \begin{tikzpicture}[main_node/.style={circle,draw,minimum size=1em,inner sep=3pt},scale=0.8]

\node[main_node] (4) at (-1.5, 0) {};
\node[main_node] (1) at (-1.5, 1) {};
\node[main_node] (5) at (0, 0) {};
\node[main_node] (2) at (0, 1) {};
\node[main_node] (6) at (1.5,0) {};
\node[main_node] (3) at (1.5, 1) {};

 \path[draw, thick]
(4) edge node {} (5) 
(5) edge node {} (6) 
(6) edge node {} (3) 
(3) edge node {} (2) 
(2) edge node {} (1) 
(4) edge node {} (2) 
(1) edge node {} (5) 
;

 \path[draw, line width=0.8mm, color=bluereadable]
(3) edge node {} (2) 
(2) edge node {} (4) 
(4) edge node {} (5)
(5) edge node {} (6)
;

\draw[gray, thick] (4) -- (-2.5,0);
\draw[gray, thick] (1) -- (-2.5,1);
\draw[bluereadable, line width=0.8mm] (6) -- (2.5,0);
\draw[bluereadable, line width=0.8mm] (3) -- (2.5,1);
\end{tikzpicture}
    \caption{Case 2}
    \label{fig:case2-5-2}
    \end{subfigure}
    \caption{The two distinct cases in which $C$ contains some vertices in copy $i$ of $G_A$.}
    \label{fig:start-options}
    %\vspace{1mm}
\end{figure}
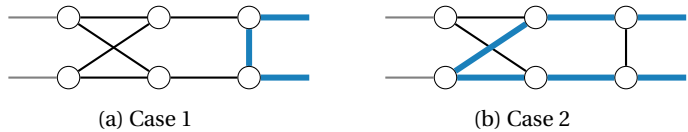
%\vspace{-6mm}
There are only three distinct ways in which $C$ can contain at least one vertex in copy $j$ of $G_A$ (in which case it contains either three, six, or six vertices in copy $j$) as shown in Fig.~\ref{fig:end-options}.

\begin{figure}
%\vspace{-5mm}
    \centering
     \begin{subfigure}[b]{0.3\textwidth}
    \begin{tikzpicture}[main_node/.style={circle,draw,minimum size=1em,inner sep=3pt},scale=0.8]

\node[main_node] (4) at (-1.5, 0) {};
\node[main_node] (1) at (-1.5, 1) {};
\node[main_node] (5) at (0, 0) {};
\node[main_node] (2) at (0, 1) {};
\node[main_node] (6) at (1.5,0) {};
\node[main_node] (3) at (1.5, 1) {};

 \path[draw, thick]
(4) edge node {} (5) 
(5) edge node {} (6) 
(6) edge node {} (3) 
(3) edge node {} (2) 
(2) edge node {} (1) 
(4) edge node {} (2) 
(1) edge node {} (5) 
;

 \path[draw, line width=0.8mm, color=bluereadable]
(1) edge node {} (2) 
(2) edge node {} (4) 

;

\draw[bluereadable, line width=0.8mm] (4) -- (-2.5,0);
\draw[bluereadable, line width=0.8mm] (1) -- (-2.5,1);
\draw[gray, thick] (6) -- (2.5,0);
\draw[gray, thick] (3) -- (2.5,1);

\end{tikzpicture}
    \caption{Case 1}
    \label{fig:case2-4-1}
    \end{subfigure}
     \hfill
     \begin{subfigure}[b]{0.3\textwidth}
    \begin{tikzpicture}[main_node/.style={circle,draw,minimum size=1em,inner sep=3pt},scale=0.8]

\node[main_node] (4) at (-1.5, 0) {};
\node[main_node] (1) at (-1.5, 1) {};
\node[main_node] (5) at (0, 0) {};
\node[main_node] (2) at (0, 1) {};
\node[main_node] (6) at (1.5,0) {};
\node[main_node] (3) at (1.5, 1) {};

 \path[draw, thick]
(4) edge node {} (5) 
(5) edge node {} (6) 
(6) edge node {} (3) 
(3) edge node {} (2) 
(2) edge node {} (1) 
(4) edge node {} (2) 
(1) edge node {} (5) 
;
 \path[draw, line width=0.8mm, color=bluereadable]
(1) edge node {} (5) 
(2) edge node {} (4) 
(2) edge node {} (3)
(5) edge node {} (6)
(3) edge node {} (6) 
;

\draw[bluereadable, line width=0.8mm] (4) -- (-2.5,0);
\draw[bluereadable, line width=0.8mm] (1) -- (-2.5,1);
\draw[gray, thick] (6) -- (2.5,0);
\draw[gray, thick] (3) -- (2.5,1);

\end{tikzpicture}
    \caption{Case 2}
    \label{fig:case2-4-2}
    \end{subfigure}
    \hfill
    \begin{subfigure}[b]{0.3\textwidth}
    \begin{tikzpicture}[main_node/.style={circle,draw,minimum size=1em,inner sep=3pt},scale=0.8]

\node[main_node] (4) at (-1.5, 0) {};
\node[main_node] (1) at (-1.5, 1) {};
\node[main_node] (5) at (0, 0) {};
\node[main_node] (2) at (0, 1) {};
\node[main_node] (6) at (1.5,0) {};
\node[main_node] (3) at (1.5, 1) {};

 \path[draw, thick]
(4) edge node {} (5) 
(5) edge node {} (6) 
(6) edge node {} (3) 
(3) edge node {} (2) 
(2) edge node {} (1) 
(4) edge node {} (2) 
(1) edge node {} (5) 
;

 \path[draw, line width=0.8mm, color=bluereadable]
(1) edge node {} (2) 
(2) edge node {} (3) 
(4) edge node {} (5)
(5) edge node {} (6)
(3) edge node {} (6) 
;

\draw[bluereadable, line width=0.8mm] (4) -- (-2.5,0);
\draw[bluereadable, line width=0.8mm] (1) -- (-2.5,1);
\draw[gray, thick] (6) -- (2.5,0);
\draw[gray, thick] (3) -- (2.5,1);
\end{tikzpicture}
    \caption{Case 3}
    \label{fig:case2-4-3}
    \end{subfigure}
    \hfill
    \vspace{-2mm}
    \caption{The three distinct cases in which $C$ contains some vertices in copy $j$ of $G_A$.}
    \label{fig:end-options}
    \vspace{-7.5mm}
\end{figure}
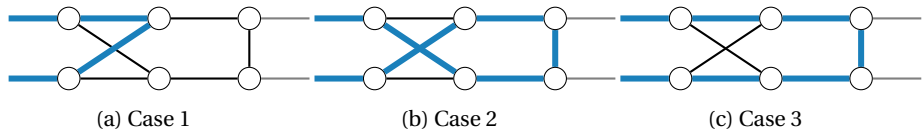

As a result, the length of $C$ is congruent to $2$ modulo $3$.
\end{proof}

We now complement the previous lemma by showing which cycle lengths are present in $F_k$.
\begin{lemma}
\label{lem:FCycPresent}
For each integer $k \geq 2$, $F_k$ contains a cycle of length $\ell$ for each $\ell \in \{4\} \cup \{3m+2: m \in [k-1]\} \cup [3k,6k]$.
\end{lemma}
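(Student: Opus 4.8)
The plan is to split the target set into its three pieces and exhibit explicit cycles for each, exploiting that $F_k$ is cubic on $6k$ vertices obtained by joining $k$ copies of $G_A$ (Fig.~\ref{fig:gadget}) into a ring. Throughout I use the local labels $1,\dots,6$ of $G_A$ for a single copy and the labels $v_{i,j}$ for the ring.

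First, the two sporadic short lengths already live inside one copy of $G_A$: the vertices $v_{i,1},v_{i,2},v_{i,4},v_{i,5}$ induce the $4$-cycle $v_{i,1}v_{i,2}v_{i,4}v_{i,5}v_{i,1}$, and $v_{i,1}v_{i,2}v_{i,3}v_{i,6}v_{i,5}v_{i,1}$ is a $5$-cycle. This settles $\ell=4$ and $\ell=3\cdot 1+2=5$.

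Next I would dispose of the whole block $[3k,6k]$ at once. The key observation is that inside a single copy of $G_A$ there is a path from vertex $1$ to vertex $3$ of \emph{each} length in $\{2,3,4,5\}$, namely $1,2,3$; then $1,5,6,3$; then $1,5,4,2,3$; and $1,2,4,5,6,3$ (on $3,4,5,6$ vertices respectively, all edges being present in $G_A$). Now I build a cycle that runs once around the ring using all $k$ connecting edges $v_{i,3}v_{i+1,1}$ and, inside copy $i$, inserts a $v_{i,1}v_{i,3}$-path of freely chosen length $\lambda_i\in\{2,3,4,5\}$. These internal paths lie in distinct copies, hence are vertex-disjoint, so the result is a single cycle of length $k+\sum_{i=1}^{k}\lambda_i$. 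As each $\lambda_i$ ranges independently over $\{2,3,4,5\}$, the sum $\sum_i\lambda_i$ attains every integer in $[2k,5k]$, so the cycle length attains every value in $[3k,6k]$; the extreme choices recover the short cycle on $\{v_{i,1},v_{i,2},v_{i,3}\}_{i}$ of length $3k$ and a hamiltonian cycle of length $6k$.

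Finally, for the medium lengths $\{3m+2:m\in[k-1]\}=\{5,8,\dots,3k-1\}$ I would construct cycles spanning a contiguous block of copies $i,i+1,\dots,j$ with $1\le i\le j\le k$ and no wrap-around, reusing exactly the subpath types catalogued in the proof of Lemma~\ref{lem:FCycNotPresent}. I cap the left end (copy $i$) with a ``start'' turnaround from Fig.~\ref{fig:start-options} contributing $s\in\{1,4\}$ internal edges, cap the right end (copy $j$) with an ``end'' turnaround from Fig.~\ref{fig:end-options} contributing $e\in\{2,5\}$ internal edges, route each interior copy with a straight ``cross'' configuration from Fig.~\ref{fig:cross-options} ($4$ internal edges each), and use both connecting edges between consecutive copies of the block. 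Writing $d=j-i$, the resulting cycle has length $s+e+4(d-1)+2d=s+e+6d-4$. Taking $(s,e)=(1,2)$ yields $6d-1=3(2d-1)+2$ and $(s,e)=(4,2)$ yields $6d+2=3(2d)+2$, so letting $d$ vary realizes every $3m+2$ with $m\le k-1$. The block uses at most $\lfloor k/2\rfloor+1\le k$ copies, which is strictly less than $k$ for $k\ge 3$; the single leftover case $k=2$, $m=1$ is already covered by the $5$-cycle above, so no block ever needs to wrap around.

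The routine part is checking the four path lengths inside $G_A$ and verifying that the catalogued left cap, interior crosses, and right cap glue into a single simple cycle — but this is precisely the decomposition underlying Lemma~\ref{lem:FCycNotPresent} (each cap joins the top and bottom ``tracks'' at one end), so it requires no new idea. The only thing one must be careful about is the bookkeeping that the three families exactly cover $\{4\}\cup\{3m+2:m\in[k-1]\}\cup[3k,6k]$ with no omissions and that the medium blocks stay within $k$ copies; the genuinely new content is the $[3k,6k]$ interpolation, while the other two parts are short explicit constructions.
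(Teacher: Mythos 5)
Your proposal is correct and follows essentially the same approach as the paper: the identical $4$-cycle, the same ring cycle through all $k$ copies with $v_{i,1}v_{i,3}$-paths of lengths $2$ through $5$ (the paper phrases this as replacements applied to the base $3k$-cycle) for $[3k,6k]$, and the same capped contiguous-block construction (start cap of $1$ or $4$ edges, interior crossings, end cap of $2$ edges) for the lengths $3m+2$. Your explicit length bookkeeping $s+e+6d-4$ and the separate handling of $k=2$, $m=1$ via the $5$-cycle are just slightly more detailed presentations of the paper's count $(2\text{ or }5)+6\bigl(\lfloor\frac{m+1}{2}\rfloor-1\bigr)+3$.
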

\begin{proof}
The cycle $v_{1,1}v_{1,2}v_{1,4}v_{1,5}v_{1,1}$ is a cycle of length four in $F_k$. For each $m \in [k-1]$ one can obtain a cycle of length $3m+2$ by combining (i) one of the subpaths shown in Fig.~\ref{fig:start-options} (containing $2$ or $5$ vertices) in copy $1$ of the graph $G_A$, (ii) one of the subpaths shown in Fig.~\ref{fig:cross-options} (containing $6$ vertices) in copies $2, 3, \ldots, \lfloor\frac{m+1}{2}\rfloor$ of the graph $G_A$ and (iii) the subpath shown in Fig.~\ref{fig:case2-4-1} (containing three vertices) in copy $\lfloor\frac{m+1}{2}\rfloor+1$ of the graph $G_A$. 

Finally, we show that there exists a cycle of length $m$ in $F_k$ for each $m \in [3k,6k]$. Let $C$ be the cycle $v_{1,1}v_{1,2}v_{1,3}v_{2,1}v_{2,2}\ldots v_{k,1}v_{k,2}v_{k,3}v_{1,1}$ of length $3k$ in $F_k$. For any $i \in [k]$, replacing the subpath $v_{i,1}v_{i,2}v_{i,3}$ with (i) $v_{i,1}v_{i,5}v_{i,6}v_{i,3}$, (ii) $v_{i,1}v_{i,5}v_{i,4}v_{i,2}v_{i,3}$ or (iii) $v_{i,1}v_{i,2}v_{i,4}v_{i,5}v_{i,6}v_{i,3}$ increases the length of $C$ by $1$, $2$ or $3$, respectively. By consecutively applying these operations for each $i \in [k]$, one can construct a cycle of the desired length.
\end{proof}

We combine Lemmas~\ref{lem:FHamConn},~\ref{lem:FCycNotPresent}, and~\ref{lem:FCycPresent} and obtain the main result of this section:

\begin{theorem}
\label{thm:HamConnGaps}
    For each integer $k \geq 2$ there exists a cubic hamiltonian-connected graph on $6k$ vertices with cycle spectrum $\{4\} \cup \{3m+2: m \in [k-1]\} \cup [3k,6k]$.
\end{theorem}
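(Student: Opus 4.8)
The plan is to take the graph $F_k$ itself as the witness and to verify the three required features---order $6k$, cubicity, and hamiltonian-connectivity---together with an exact determination of its cycle spectrum, all of which reduce to the three preceding lemmas plus elementary bookkeeping.

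First I would record the structural facts that are immediate from the construction. Since $F_k$ is assembled from $k$ copies of the six-vertex graph $G_A$, it has exactly $6k$ vertices. For cubicity I would observe that inside a single copy of $G_A$ the vertices labelled $2$ and $5$ already have degree $3$, while those labelled $1,3,4,6$ have degree $2$; the inter-copy edges $v_{i,3}v_{i+1,1}$ and $v_{i,6}v_{i+1,4}$ supply each degree-two vertex with exactly one further incident edge, so every vertex of $F_k$ has degree $3$. Hamiltonian-connectivity is then exactly Lemma~\ref{lem:FHamConn}.

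The core of the argument is to show that the cycle spectrum equals $\{4\} \cup \{3m+2 : m \in [k-1]\} \cup [3k,6k]$, for which I would prove both inclusions. The inclusion $\supseteq$ is precisely Lemma~\ref{lem:FCycPresent}. For the reverse inclusion I would argue that the complement of the claimed set, among all feasible cycle lengths, is covered by Lemma~\ref{lem:FCycNotPresent}: every cycle has length at least $3$ and at most $6k$ (as $F_k$ has only $6k$ vertices), so the interval $[3k,6k]$ imposes no further constraint and it remains only to treat lengths in $[3,3k-1]$. Here a short modular check does the work. The lengths in $[3,3k-1]$ lying outside the claimed set are exactly the multiples of $3$ up to $3k-3$ together with the values congruent to $1 \pmod 3$ from $7$ up to $3k-2$ (note that $4$ is retained), and these are precisely the lengths $3\ell$ with $\ell \in [k-1]$ and $3\ell+1$ with $\ell \in [2,k-1]$ that Lemma~\ref{lem:FCycNotPresent} forbids. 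Thus the present-length and absent-length lemmas partition $[3,3k-1]$ exactly, which pins down the spectrum.

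Given that the three lemmas are already in hand, I do not expect a genuine obstacle in the theorem itself; the only point requiring care is verifying that the two congruence families of Lemma~\ref{lem:FCycNotPresent} and the families of Lemma~\ref{lem:FCycPresent} are mutually exhaustive over $[3,3k-1]$, with the single value $4$ correctly sitting on the ``present'' side rather than being swept up by the $3\ell+1$ family---which is exactly why that family is indexed from $\ell=2$. The substantive difficulty of the construction lives entirely in the lemmas, most notably in the hamiltonian-connectivity proof with its large explicit case analysis, and has already been discharged.
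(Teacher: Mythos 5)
Your proposal is correct and takes essentially the same route as the paper, which proves the theorem simply by combining Lemmas~\ref{lem:FHamConn}, \ref{lem:FCycNotPresent}, and~\ref{lem:FCycPresent}; your additional bookkeeping (order, cubicity, and the check that the two congruence families of Lemma~\ref{lem:FCycNotPresent} together with the lengths of Lemma~\ref{lem:FCycPresent} exhaust all feasible lengths in $[3,6k]$) is exactly the detail the paper leaves implicit, and your verification of it is accurate.
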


\section{Concluding remarks}
\label{sec:conclusion}
There are several natural ways to adapt the Faudree-Schelp Conjecture. Our first question is motivated by Theorem~\ref{thm:HFamily} and Thomassen's counterexamples~\cite{thomassen1978counterexamples}.

\begin{ques}
Do there exist real numbers $r_1$ and $r_2$ with $\frac{2}{3} \leq r_1 \leq r_2 < 1$ such that every hamiltonian-connected graph on $n$ vertices satisfies $\mathfrak{P}_k$ for all $k$ such that $r_1n \leq k \leq r_2n$ (whenever $n$ is large enough)?
\end{ques}

%Theorem~\ref{thm:HFamily} indicates that $r_1 \geq \frac{2}{3}$ if such real numbers exist. 

We recall that, for $k \ge 3$, $\mathfrak{P}_k$ implies the presence of a $k$-cycle. In the light of Theorem~\ref{thm:HamConnGaps} we wonder whether hamiltonian-connected graphs may have gaps in the second half of their cycle spectrum, i.e.\ the existence of counterexamples to the Faudree-Schelp Conjecture in the sense that large cycles are entirely missing from the spectrum. In other words:%\Jan{I think ``strong counterexamples to the Faudree-Schelp conjecture'' might be a bit confusing for the readers as the FS conjecture is about missing pathlengths and right above Theorem~\ref{thm:HFamily} we already write that the FS conjecture is false in a strong sense...} \Car{Indeed, we seem to use ``strong'' in two different ways here: first that there are a linear number of path lengths missing, second that there gaps in the cycle spectrum; maybe we keep the first usage of ``strong'' and change the text here by removing ``strong'' and describing / repeating the connection between FS and gaps in the cycle spectrum}\Jor{I agree with Carol's suggestion.} \Jan{Sounds good to me, thanks!} 
\begin{ques}
Does there exist an $n$-vertex hamiltonian-connected graph whose cycle spectrum does not contain an integer $\ell$ with $\frac{n}{2} \leq \ell \leq n-1$? More generally, among all hamiltonian-connected graphs, how small can the largest (relative to the graph's order) contiguous subset of the cycle spectrum be? %\Car{Is this clear?}\Jor{Clear to me.} \Jan{Yes, for me as well.}
\end{ques}
%\Jor{Carol, I do not know much about the cycle spectrum of hamiltonian-connected graphs. I found the previous question interesting and therefore I wrote it down, but is it still open? Any such graph would yield another counterexample to the Faudree-Schelp conjecture, but maybe it is known that such a graph cannot exist.}

Regarding the last question, we mention a related result. For planar 4-connected graphs without 4-cycles (which are hamiltonian-connected~\cite{T83}), Lo proved that the second half of their cycle spectrum is contiguous~\cite{L24}. %\Car{Is the preceding sentence clear or not?}\Jor{Clear to me.} \Jan{Yes.} 
It might also be interesting to determine if there exist hamiltonian-connected graphs on $n$ vertices for which the cardinality of their cycle spectrum is smaller than $2n/3$.

\section*{Acknowledgements}
The authors would like to thank Jarne Renders for early prototype validation and assistance with his software. The computational resources and services used in this work were provided by the VSC (Flemish Supercomputer Center), funded by the Research Foundation Flanders (FWO) and the Flemish Government - department EWI. The research of Jan Goedgebeur and Michiel Provoost was supported by Internal Funds of KU Leuven and an FWO grant with grant number G0AGX24N. Jorik Jooken is supported by a Postdoctoral Fellowship of the Research Foundation Flanders (FWO) with grant number 1222524N. 
\bibliographystyle{splncs04}
\bibliography{mybib.bib}

\newpage
\section*{Appendix}
%\label{appendix}

We found the graphs that led to the ideas for the constructions 
 as well as the fact that the graph in Fig.~\ref{fig:smallest} is the smallest counterexample (both in terms of order and size) by exhaustively generating graphs. The generation itself was done using \verb|geng|~\cite{nauty} for all simple graphs up to order 11 and using \verb|plantri|~\cite{plantri} for several subclasses of planar graphs up to higher orders as specified in the tables below. A two-stage filter was developed to find which graphs were counterexamples. In the next section we expand on this filter, which is available at \cite{github}.

\subsection*{Filtering graphs}
The filter used operates in two stages to allow independent verification of each stage. 

In the first stage graphs are filtered that are hamiltonian-connected. We employ a backtracking algorithm that recursively extends paths with a given start vertex. It maintains two data structures: one stores if a hamiltonian path has been found between the vertices (using a matrix $M$, where $M_{i,j}=1$ if a hamiltonian $ij$-path has been found and $M_{i,j}=0$ otherwise), whereas the other stores the vertices that are in the current path. Additionally, the program also stores what the current last vertex in the path is. The hamiltonian paths are constructed using a depth-first search: the program starts by selecting a vertex $v_{\text{start}}$ for which there is still a distinct vertex $v$ so that $M_{v_{\text{start}},v}=0$. Then all possible paths from $v_{\text{start}}$ are constructed by recursively adding one vertex to the current path and $M_{v_{\text{start}},v'}$ is set to $1$ whenever a hamiltonian $v_{\text{start}}v'$-path has been found (note that $v'$ can be distinct from $v$). Let $H$ be the graph induced by all vertices that are not in the current path. Pruning rules are used to speed up the search. More precisely, the algorithm backtracks in the following cases:
\begin{itemize}
    \item If $H$ is disconnected.
    \item If $H$ has at least three vertices of degree one.
    \item If $H$ has two vertices of degree one, both of which are not adjacent (in the original graph) with the last vertex in the current path.
    \item If $H$ has one vertex $v$ of degree one which is not adjacent (in the original graph) with the last vertex in the current path and $M_{v_{\text{start}},v}=1$.
    \item If $M_{v_{\text{start}},v}=1$ for all $v \in V(H)$.
\end{itemize}
In the second stage, graphs are filtered that do not satisfy ${\mathfrak P}_k$ for some $k$. To determine if a graph satisfies ${\mathfrak P}_k$, the algorithm stores a set of lengths for each pair of vertices in the graph, thereby indicating whether or not a path of the specified length exists between the two corresponding vertices. In this stage we exhaustively verify that ${\mathfrak P}_k$ is satisfied with a depth-first search similar to the one in stage one (but this time the algorithm will backtrack when the length of the current path is at least $k$). Important here is that we start with $k=n-1$ (where $n$ is the order of the graph) and iteratively check for shorter lengths. This way the subpaths of shorter lengths are already stored while searching for paths of longer lengths if they are found. 

Stage two allows for different restrictions on when a graph passes the filter: If it uses the \texttt{all} argument, every graph passes stage two. If it uses the \texttt{full} argument, every graph that satisfies ${\mathfrak P}_k$ for every $3\leq k \leq n$ passes stage two. If it uses the \texttt{any} argument, every graph that does not satisfy ${\mathfrak P}_k$ for some $3\leq k \leq n$ passes stage two. Lastly, if it uses the \texttt{last} argument, every graph that does not satisfy ${\mathfrak P}_k$ for some $n/2+1\leq k \leq n$ passes stage two.

\subsection*{Overview of generated graphs and counterexamples}

The graph generator \verb|plantri|~\cite{plantri} was used for generating all planar 3-connected graphs up to order 13, all planar 3-connected cubic graphs up to order 26 and several other planar graph classes as specified in Table~\ref{table:planar_3_connected_cubic} and Table~\ref{table:no_counterexamples}. Thomassen~\cite{T83} proved that all planar 4-connected graphs are hamiltonian-connected and the classes mentioned in Table~\ref{table:no_counterexamples} are all related to this class. The counts were verified with an independent (but slower) implementation adapted from~\cite{Jorik} up to order 11 for the planar 3-connected case and up to order 20 for the planar 3-connected cubic case. 

In Table~\ref{table:2_connected} we used \verb|geng|~\cite{nauty} for generating all 2-connected graphs up to order 11. The counts in that table were verified with the independent implementation up to order 9. The fastest algorithm was used on a system with 719 threads for up to one CPU-year to check for all graph classes. The bottleneck of the algorithm is the second phase where we verify that the graphs do not satisfy ${\mathfrak P}_k$ for some $n/2+1 \leq k \leq n$. For the independent verification, we adapted the algorithm from \cite{Jorik} that generates all cycles of a graph. Both algorithms produced the same results. The code is made publicly available on~\cite{github}.

We note that, with the exception of the graphs in Table~\ref{table:2_connected}, all graphs that we checked are 3-connected, since hamiltonian-connected graphs on at least 4 vertices are necessarily 3-connected. The reason that we did check 2-connected graphs in the non-planar case is because \texttt{geng} has no support to only generate 3-connected graphs. 
%\Jan{In Tabel 3 kun je eventueel nog een kolom toevoegen aantal 3-connected graphs (dus geng runnen en dan met pickg de 3-connected gevallen eruit halen). Dit kan de rekentijd misschien zelfs wat versnellen omdat je zo minder grafen moet testen op hamiltonian-connectedness.} \Jan{Gezien de tijd kunnen we dit gerust laten vallen of pas bij een revisie of resubmission toevoegen.}

\begin{table}[h!]
\centering
\begin{tabular}{|c|c|c|c|c|}
\hline
\textbf{Graph class} & \textbf{~Order~}& \textbf{~\#Graphs~} & \textbf{~\#Ham-conn. graphs~} & \textbf{~\#Counterexamples~} \\
\hline
\multirow{6}{*}{~Planar 3-connected~} & 8 & 257 & 246 & 2 \\
 & 9 & 2 606 & 2 526 & 0 \\
 & 10 & 32 300 & 30 842 & 31 \\
 & 11 & 440 564 & 416 108 & 8 \\
 & 12 & 6 384 634 & 5 955 716 & 271 \\
 & 13 & 96 262 938 & 88 766 610 & 214 \\
\hline
\multirow{12}{*}{Planar 3-connected cubic} & 8 & 2 & 1 & 1 \\
 & 10 & 5 & 4 & 1 \\
 & 12 & 14 & 9 & 4 \\
 & 14 & 50 & 35 & 6 \\
 & 16 & 233 & 151 & 14 \\
 & 18 & 1 249 & 826 & 34 \\
 & 20 & 7 595 & 4 833 & 55 \\
 & 22 & 49 566 & 30 904 & 225 \\
 & 24 & 339 722 & 206 022 & 308 \\
 & 26 & 2 406 841 & 1 427 986 & 1 822  \\
 %& 28 & 17 490 241 & 10 174 995 &   \\
 %& 30 & 129 664 753 & 74 247 949 &  \\
\hline
\end{tabular}
\caption{Planar 3-connected and planar 3-connected cubic graphs. These were generated with \texttt{plantri}~\cite{plantri} and filtered using our program~\cite{github}.}
\label{table:planar_3_connected_cubic}
\end{table}

\begin{table}[h!]
\centering
\begin{tabular}{|c|c|c|c|c|}
\hline
\textbf{Graph class} & \textbf{~Order~} & \textbf{~\#Graphs~} & \textbf{~\#Ham-conn. graphs~} & \textbf{~\#Counterexamples~} \\
\hline
Planar 3-connected 4-regular & $\leq 18$ & 6 473 & 6 473 & 0 \\
\hline
Planar 3-connected minimal degree 4~ & $\leq 13$ & 35 793 & 35 793 & 0 \\
\hline
Planar 3-connected minimal degree 5~ & $\leq 18$ & 46 & 46 & 0 \\
\hline
Planar 4-connected & $\leq 15$ & 1 759 602 & 1 759 602 & 0 \\
\hline
Planar 3-connected with girth at least 4 & $\leq 15$ & 394 & 71 & 0 \\
\hline
\end{tabular}
\caption{Planar graph classes without counterexamples up to the given orders. These were generated with \texttt{plantri}~\cite{plantri} and some were filtered using \texttt{pickg}~\cite{nauty}. All were then filtered using our program~\cite{github}.}
\label{table:no_counterexamples}
\end{table}

\begin{table}[h!]
\centering
\begin{tabular}{|c|c|c|c|c|}
\hline
\textbf{Graph class} & \textbf{~Order~}& \textbf{~\#Graphs~} & \textbf{~\#Ham-conn. graphs~} & \textbf{~\#Counterexamples~} \\
\hline
\multirow{8}{*}{~2-connected graphs~} & 3 & 1 & 1 & 0 \\
& 4 & 3 & 1 & 0 \\
& 5 & 10 & 3 & 0 \\
& 6 & 56 & 13 & 0 \\
& 7 & 468 & 116 & 0 \\
& 8 & 7 123 & 2 009 & 3 \\
& 9 & 194 066 & 72 529 & 0 \\
& 10 & 9 743 542 & 4 784 340 & 141 \\
& 11 & 900 969 091 & 397 488 917 & 15 \\
\hline
\end{tabular}
\caption{2-connected graphs. These were generated with \texttt{geng}~\cite{nauty} and filtered using our program~\cite{github}. }
\label{table:2_connected}
\end{table}

\begin{comment}
\begin{table}[h!]
\centering
\begin{tabular}{|c|c|c|}
\hline
\textbf{Category} & \textbf{Order} & \textbf{Count} \\
\hline
Planar graphs with minimal 1 vertex of degree 4 or up & 13 & 110 \\
 & 14 & 1653 \\
 & 15 & 366 \\
\hline
\end{tabular}
\caption{Planar Graphs with Minimal 1 Vertex of Degree 4 or Up}
\label{table:minimal_1_vertex}
\end{table}
    
\end{comment}
\clearpage
\end{document}